\theoremstyle{plain}
\newtheorem{thm}{Theorem}[section] 
\newtheorem{defn}[thm]{Definition} 
\newtheorem{lem}[thm]{Lemma} 
\newtheorem{pro}[thm]{Proposition}
\theoremstyle{definition}
\newtheorem{rem}[thm]{Remark}
\begin{document}
	\begin{center}
		\section*{On the extension and kernels of signed bimeasures and their role in stochastic integration}
		\subsection*{Riccardo Passeggeri\footnote{\noindent Department of Mathematics, Imperial College London. Email: riccardo.passeggeri@imperial.ac.uk}}
		\today
	\end{center}

 			\begin{abstract}
 			In this work we provide a necessary and sufficient condition for the extension of signed bimeasures on $\delta$-rings and for the existence of relative kernels. This result generalises the construction method of regular conditional probabilities to the more general setting of (not necessarily finite) signed measures lying on not necessarily measurable spaces. Building on this result, we obtain the most general theory of stochastic integrals based on real-valued random measures, thus extending and generalising the whole integration theory developed in the celebrated Rajput and Rosinski's paper (\textit{Probab.~Theory Relat.~Fields}, \textbf{82} (1989) 451-487) and the recent results by Passeggeri (\textit{Stoch.~Process.~Their Appl.}, \textbf{130}, (3), (2020), 1735-1791).\\\\
 				\textbf{Keywords:} independently scattered random measure, signed bimeasure, regular conditional probability, Markov kernel,  quasi-infinite divisibility, stochastic integral.\\\\
 			 \textbf{MSC (2020):} 60G57, 60A10, 60E07, 28A35, 28A50, 28C15
 			\end{abstract}

\section{Introduction}
Bilinear measure theory has a long history and its origins date back to the work of Fr\'{e}chet in 1915 \cite{Frechet}. In this work, Fr\'{e}chet characterized the bounded bilinear functionals on $C[0,1]$, which are bimeasures when identified as set functions \cite{Ylinen}. Across the decades there have been numerous works on bilinear (and multilinear) measure theory. A major example is given by the work \cite{Grothe} by Grothendieck, where he introduced what is now called the Grothendieck's inequality, namely what he described as ``the fundamental theorem in the metric theory of tensor products". Further relevant works, which include also the efforts of obtaining the Riesz-Markov-Kakutani representation theorem in the bilinear and multilinear case, are \cite{Blei,Blei2,Bowers,Dobrakov,Horo,Karni,PrekopaI,Ylinen,Ylinen2}, among others.

However, all these works focused on algebras or $\sigma$-algebras of sets, but not on $\delta$-rings. Recall that a non-empty collection of sets is called a $\delta$-ring if it is closed under union, relative complementation, and countable intersection. Further, recall that any $\sigma$-algebra is a $\sigma$-ring and any $\sigma$-ring is a $\delta$-ring, but the reverse it is not true, namely not every $\delta$-ring is a $\sigma$-ring and not every $\sigma$-ring is a $\sigma$-algebra. The choice of investigating the $\delta$-rings framework, apart from an intrinsic interest, comes from the fact that this is one of the topological framework mostly used in the field of probability and stochastic analysis. 

For example, this is the framework at the core of the theory of random measures (\cite{Kallenberg2}). In the first section of the first chapter of the recent book of Kallenberg \cite{Kallenberg2}, the author introduces the notion of localising ring, which is a ring closed under countable intersections, thus a $\delta$-ring. The localising ring is then used the build the whole theory of (positive) random measures. Moreover, $\delta$-rings are the topological framework of certain classes of stochastic processes, in particular infinitely divisible (ID) stochastic processes. This class of processes contain some of the most studied processes: Brownian motions, Poisson processes, L\'{e}vy processes. The topological framework of the Rajput and Rosinski's paper \cite{RajRos}, where spectral representations for these processes are obtained, is the $\delta$-ring. Their work is still nowadays one of the main references for ID processes and one of the initial motivations of our work was to generalise their work.

Infinitely divisible (ID) distributions constitute one of the most important classes of probability distributions. Their investigation dates back to the work of L\'{e}vy, Kolmogorov and De Finetti. One of their most attractive properties is that their characteristic function have a unique explicit formulation, called the L\'{e}vy-Khintchine formulation, in terms of three mathematical objects. These are the drift, which is a real valued constant, the Gaussian component, which is a non-negative constant, and the L\'{e}vy measure, which is a measure on $\mathbb{R}$ satisfying an integrability condition and with no mass at $\{0\}$. Gaussian and Poisson distributions are examples of this class. 

In 2018, in \cite{LPS} Sato, Lindner and Pan introduced the class of quasi-infinitely divisible (QID) distributions. A QID random variable is defined as follows: a random variable $X$ is QID if and only if there exist two ID random variables $Y$ and $Z$ s.t.~$X+Y\stackrel{d}{=}Z$ and $Y$ is independent of $X$. Like ID distributions, QID distributions posses a  L\'{e}vy-Khintchine formulation where the L\'{e}vy measure is now allowed to take negative values. Any ID distribution is QID, but the converse is not always true.

In \cite{LPS}, the authors show that QID distributions are dense in the space of all probability distributions with respect to weak convergence, that distributions with a point mass greater than $1/2$ are QID, and that distributions that are concentrated on the integers are QID if and only if their characteristic functions have no zeros. In \cite{Pass}, the QID framework is extended to real-valued random measures and stochastic processes. The work \cite{Pass} also represents the first attempt to extend the celebrated Rajput and Rosinski's 1989 paper \cite{RajRos} to the QID framework. Furthermore, QID distributions have already shown to have an impact in various fields: from mathematical physics \cite{Physics2,Physics1} to number theory \cite{Naka,Naka2}, and from Bayesian analysis \cite{PassRM} to insurance mathematics \cite{Zhang}. Building on QID results, Cram\'{e}r-Wold devices for infinite and quasi-infinite divisibility have been recently proved in \cite{Khartov,Lindner}.

The first main contribution of this paper is a general measure theoretical result on the extension of signed bimeasures on $\delta$-rings. In particular, consider a bimeasure on the Cartesian product of $(X,\mathcal{B})$ and $(T,\mathcal{A})$, where the former is a Borel measurable space and the latter is such that $T$ is an arbitrary non-empty set and $\mathcal{A}$ is a $\delta$-ring with the additional condition that there exists an increasing sequence of sets $T_{1},T_{2},\dots \in {\mathcal {A}}$ s.t.~$\bigcup _{n\in \mathbb {N} }T_{n}=T$. Then, the result states that there exists a necessary and sufficient condition for the existence of a unique signed measure on the $\delta$-ring $\bigcup_{D\in\mathcal{A}}(\mathcal{A}\cap D)\otimes\mathcal{B}$, and that its unique Jordan decomposition uniquely extend to measures on the $\sigma$-algebra $\sigma(\mathcal{A})\otimes\mathcal{B}$. Moreover, the result provides unique (sub-Markovian) kernels for all these measures. 

Theorem 5.18 in \cite{Pass}, which is main achievement of \cite{Pass}, provides this result only for $\sigma$-algebras. This is a severe limitation as the topological framework of \cite{Pass}. which is the one of Rajput and Rosinski's paper \cite{RajRos}, is based on $\delta$-rings. In this paper, we succeed in extending Theorem 5.18 of \cite{Pass} and obtain the result for the right and more general topological framework of $\delta$-rings. This is our first main contribution.

We note that this result also extends a classical measure theoretical result by Horowitz in \cite{Horo} and can be seen as the generalisation of the construction method of regular conditional probabilities when the probability measure is an extended (\textit{i.e.}~not necessarily finite) signed measure.

Building on this first main contribution, we are able to obtain our second main contribution: we generalise, unify and simplify the theory of stochastic integrals based on random measures, thus extending the whole integration theory developed in Rajput and Rosinski's 1989 paper \cite{RajRos} and the results of \cite{Pass}. In particular, this paper represents the realization of the main goal envisioned and attempted in \cite{Pass} and it provides the most general integration theory with respect to random measures.

In particular, in \cite{Pass} the author generalises the results of Rajput and Rosinski's work \cite{RajRos} to the more general QID framework -- since any ID distribution is QID, but not vice versa. However, this is done under various restrictive assumptions (see Section 5 in \cite{Pass}). In this paper, building on our first contribution, we are able obtain the whole QID integration theory under one single assumption which is weaker than all the assumptions of \cite{Pass} combined together. More importantly, this assumption is \textit{always satisfied} in the ID setting and is unavoidable since it comes directly from the necessary and sufficient condition on the extension of signed bimeasures. This allows us to affirm that in this paper we provide a true and complete extension of the results in the Rajput and Rosinski's 1989 paper \cite{RajRos} and in \cite{Pass}, thus obtaining the most complete integration theory with respect to real-valued random measure.

As mentioned before, the topological framework of this paper is central in the theory of positive random measures (see \cite{Kallenberg2}). Thus, the results presented in this work are fundamental for developing the theory of random signed measures for which only sparse and specific results are known, precisely because of the lack of general measure theoretical results for signed measures in the literature. The interest on random signed measures is not new (see Daley and Vere-Jones' book \cite{Daley}), but it is sharply increasing due to modelling needs in Bayesian non-parametric analysis (see \cite{Nau,PassRSM}, among others).

The paper is structured as follows. Section \ref{Sec-Notation} concerns with the notations and some preliminaries. In Section \ref{Sec-Spectral}, the mentioned general measure theoretical result is presented (see Theorem \ref{pr-monster}). In Section \ref{Sec-Integration}, building on this result we derive the integration theory for QID random measures: L\'{e}vy-Khintchine formulations, integrability conditions, and continuity properties for QID stochastic integrals.
\section{Notation and Preliminaries}\label{Sec-Notation}
By a measure on a measurable space $(X,\mathcal{G})$ we always mean a positive measure on $(X,\mathcal{G})$, \textit{i.e.}~a $[0,\infty]$-valued $\sigma$-additive set function on $\mathcal{G}$ that assigns the value $0$ to the empty set. For a non-empty set $X$, by $\mathcal{B}(X)$ we mean the Borel $\sigma$-algebra of $X$, unless stated differently. The law and the characteristic function of a random variable $X$ will be denoted by $\mathcal{L}(X)$ and by $\hat{\mathcal{L}}(X)$, respectively. For two measurable spaces $(X,\mathcal{G})$ and $(Y,\mathcal{F})$, we denote by $\mathcal{G}\otimes\mathcal{F}$ the product $\sigma$-algebra of $\mathcal{G}$ and $\mathcal{F}$, and by $\mathcal{G}\times\mathcal{F}$ their Cartesian product. Let us recall some definitions.

\begin{defn}[extended signed measure]\label{Def-signedmeasure}
	Given a measurable space $(X, \Sigma)$, that is, a set $X$ with a $\sigma$-algebra $\Sigma$ on it, an extended signed measure is a function $\ \mu :\Sigma \to {\mathbb {R}}\cup \{\infty ,-\infty \}$ s.t.~$\mu (\emptyset )=0$ and $\mu$ is $\sigma$-additive, that is, it satisfies the equality $ \mu \left(\bigcup _{{n=1}}^{\infty }A_{n}\right)=\sum _{{n=1}}^{\infty }\mu (A_{n})$ where the series on the right must converge in ${\mathbb {R}}\cup \{\infty ,-\infty \}$ absolutely (namely the value of the series is independent of the order of its elements), for any sequence $A_{1}, A_{2},...$ of disjoint sets in $\Sigma$.
\end{defn}
\noindent As a consequence any extended signed measure can take plus or minus infinity as value, but not both. In this work, we use the term `signed measure' for an extended signed measure. Further, the \textit{total variation} of a signed measure $\mu$ is defined as the measure $|\mu|:\Sigma\rightarrow [0, \infty]$ defined by
\begin{equation}\label{def-totalvariation}
|\mu|(A):=\sup\sum_{j=1}^{\infty}|\mu(A_{j})|,
\end{equation}
where the supremum is taken over all the partitions $\{A_{j}\}$ of $A\in\Sigma$. The total variation $|\mu|$ is finite if and only if $\mu$ is finite. Let us recall the definition of a signed bimeasure.
\begin{defn}[Signed bimeasure]
	Let $(X,\Sigma)$ and $(Y,\Gamma)$ be two measurable spaces. A \textnormal{signed bimeasure} is a function $M:\Sigma\times\Gamma\rightarrow[-\infty,\infty]$ such that:
	\\\textnormal{(i)} the function $A\rightarrow M(A,B)$ is a signed measure on $\Sigma$ for every $B\in\Gamma$,
	\\\textnormal{(i)} the function $B\rightarrow M(A,B)$ is a signed measure on $\Gamma$ for every $A\in\Sigma$.
	
\end{defn}
\noindent Given a signed bimeasure $G$ on $\Sigma\times\Gamma$, we denote by $G^{+}$ and $G^{-}$ the Jordan decomposition of $B\mapsto G(A,B)$ for fixed $A\in\Sigma$, and by $G_{+}$ and $G_{-}$ the Jordan decomposition of $A\mapsto M(A,B)$ for fixed $B\in\Gamma$. 

We use the term `measure' and `signed measure' not only in the case of $\sigma$-algebra, but also in the case of rings, as follows.
\begin{defn}[Signed measure on a ring] \label{Def-signedmeasure-ring} A set function $\mu(A)$ defined on the elements of a ring $\mathcal{R}$ with values in $[-\infty,\infty]$ will be called a signed measure, if $\mu(\emptyset)=0$ and if for every sequence $A_{1},A_{2}, . . .$ of disjoint sets of $\mathcal{R}$ for which $A=\bigcup_{k=1}^{\infty}A_{k}\in\mathcal{R}$ we have
	\begin{equation}\label{measure on a ring}
	\mu(A)=\sum_{k=1}^{\infty}\mu(A_{k})
	\end{equation}
	and the relation (\ref{measure on a ring}) holds absolutely (namely independent of the order of its elements).
\end{defn}
\noindent Similarly, it is possible to extend the definition of bimeasures on rings. Let us now recall the celebrated Carath\'{e}odory's extension theorem.
\begin{thm}
	[Carath\'{e}odory's extension theorem, see Theorem 1.41 in \cite{Klenke}] Let $\mu$ be a measure on a ring $\mathcal{R}$ of subsets of a space $X$. Assume that $\mu$ is $\sigma$-finite (\textit{i.e.}~that there exists $S_{1},S_{2},...\in\mathcal{R}$ such that $X=\cup_{n=1}^{\infty}S_{n}$ and that $\mu(S_{n})<\infty$ for every $n\in\mathbb{N}$) then there exists a unique $\sigma$-finite measure $\bar{\mu}$ on $\sigma(\mathcal{R})$ such that $\mu(A)=\bar{\mu}(A)$ for all $A\in\mathcal{R}$.
\end{thm}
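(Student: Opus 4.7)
The plan is to follow the classical outer-measure route due to Carath\'{e}odory, since this is the standard proof of the cited theorem (as in \cite{Klenke}). I would first construct an outer measure $\mu^{*}$ on the full power set $2^{X}$ by setting
\[
\mu^{*}(E):=\inf\left\{\sum_{n=1}^{\infty}\mu(A_{n}):\; A_{n}\in\mathcal{R},\; E\subseteq\bigcup_{n=1}^{\infty}A_{n}\right\},
\]
with the convention $\mu^{*}(E)=\infty$ if no such covering exists, and then verify the routine properties: $\mu^{*}(\emptyset)=0$, monotonicity, and countable subadditivity. The $\sigma$-finiteness hypothesis, applied to the sets $S_{n}\in\mathcal{R}$ with $\mu(S_{n})<\infty$ and $X=\bigcup_{n}S_{n}$, guarantees that every subset of $X$ admits at least one covering, so $\mu^{*}$ is well defined on all of $2^{X}$.

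Next I would show that $\mu^{*}$ agrees with $\mu$ on $\mathcal{R}$. One direction, $\mu^{*}(A)\leq\mu(A)$ for $A\in\mathcal{R}$, is trivial by taking the covering $\{A,\emptyset,\emptyset,\dots\}$. The reverse inequality uses the $\sigma$-additivity of $\mu$ on $\mathcal{R}$: for any covering $\{A_{n}\}\subseteq\mathcal{R}$ of $A\in\mathcal{R}$, the disjointification $B_{n}:=A\cap A_{n}\setminus\bigcup_{k<n}A_{k}$ lies in $\mathcal{R}$ (rings are closed under the operations involved), and $A=\bigsqcup_{n}B_{n}\in\mathcal{R}$, so $\sigma$-additivity of $\mu$ on the ring yields $\mu(A)=\sum_{n}\mu(B_{n})\leq\sum_{n}\mu(A_{n})$; taking the infimum gives $\mu(A)\leq\mu^{*}(A)$.

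I would then invoke Carath\'{e}odory's criterion: a set $A\subseteq X$ is measurable if $\mu^{*}(E)=\mu^{*}(E\cap A)+\mu^{*}(E\cap A^{c})$ for every $E\subseteq X$. The standard verification shows the collection $\mathcal{M}$ of measurable sets is a $\sigma$-algebra on which $\mu^{*}$ is a measure. The crucial step is to show $\mathcal{R}\subseteq\mathcal{M}$: for $A\in\mathcal{R}$ and $E\subseteq X$, one reduces to coverings $E\subseteq\bigcup_{n}A_{n}$ with $A_{n}\in\mathcal{R}$ and uses $A_{n}=(A_{n}\cap A)\sqcup(A_{n}\setminus A)$, where both pieces lie in $\mathcal{R}$ because $\mathcal{R}$ is a ring; additivity of $\mu$ on $\mathcal{R}$ and the definition of $\mu^{*}$ then give the desired equality. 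Since $\mathcal{M}$ is a $\sigma$-algebra containing $\mathcal{R}$, it contains $\sigma(\mathcal{R})$, and the restriction $\bar{\mu}:=\mu^{*}|_{\sigma(\mathcal{R})}$ provides the sought extension, which is $\sigma$-finite because $\bar{\mu}(S_{n})=\mu(S_{n})<\infty$.

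For uniqueness, I would apply the Dynkin $\pi$--$\lambda$ theorem (or an equivalent monotone class argument). Let $\nu$ be any other $\sigma$-finite measure on $\sigma(\mathcal{R})$ extending $\mu$. Fix $n$ and define $\mathcal{D}_{n}:=\{A\in\sigma(\mathcal{R}):\bar{\mu}(A\cap S_{n})=\nu(A\cap S_{n})\}$; this is a $\lambda$-system containing the $\pi$-system $\mathcal{R}$, hence contains $\sigma(\mathcal{R})$. Letting $n\to\infty$ and using $\sigma$-finiteness and monotone convergence yields $\bar{\mu}=\nu$ on $\sigma(\mathcal{R})$. The only genuinely delicate step is the measurability of ring elements under the Carath\'{e}odory criterion; everything else is bookkeeping once the ring structure and $\sigma$-additivity are exploited properly.
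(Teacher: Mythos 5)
The paper does not prove this statement at all: it is imported verbatim as a classical result, with a citation to Theorem 1.41 in Klenke, so there is no in-paper argument to compare against. Your outer-measure construction is the standard and correct proof of exactly that theorem, and all the key steps (agreement of $\mu^{*}$ with $\mu$ on $\mathcal{R}$ via disjointification, Carath\'{e}odory measurability of ring elements, uniqueness via the $\pi$--$\lambda$ theorem on the finite-measure pieces) are in place. The only bookkeeping point worth tightening is in the uniqueness step: the hypothesis does not say the $S_{n}$ are increasing, so before ``letting $n\to\infty$'' you should replace $S_{n}$ by $\tilde{S}_{n}:=S_{1}\cup\dots\cup S_{n}$, which still lies in $\mathcal{R}$ and has finite measure by subadditivity, and then use continuity from below along $\tilde{S}_{n}\uparrow X$.
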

Denote by $S$ an arbitrary non-empty set. Let $\mathcal{S}$ be a $\delta$-ring with the additional condition that there exists an increasing sequence of sets $S_{1},S_{2},\dots \in {\mathcal {S}}$ s.t.~$\bigcup _{n\in \mathbb {N} }S_{n}=S$. 
\begin{defn}[random measure]\label{def-rm-as-in} Let $\Lambda = \{\Lambda(A):\,\, A\in\mathcal{S}\}$ be a real valued stochastic process defined on some probability space $(\Omega,\mathcal{F},\mathbb{P})$. We call $\Lambda$ to be a \textnormal{random measure}, if, for every sequence $\{A_{n}\}$ of disjoint sets in $\mathcal{S}$, the random variables $\Lambda(A_{n})$, $n= 1, 2, ...,$ are independent, and, if $\bigcup_{n=1}^{\infty}A_{n}\in\mathcal{S}$, then we have $\Lambda(\bigcup_{n=1}^{\infty}A_{n})=\sum_{n=1}^{\infty}\Lambda(A_{n})$ a.s. (where the series is assumed to converge almost surely).
\end{defn}
Notice that a more correct, but way more tedious, name for a random measure is `independently scattered real-valued completely additive stochastic set function on $\mathcal{S}$'. We remark that random measures are called sometimes called random noises, see \textit{e.g.}~\cite{SamTaq}, to distinguish from the random measures as defined in \cite{Kallenberg2}.

We recall the following result from Pr\'{e}kopa's works.
\begin{thm}[Theorem 2.1 in \cite{PrekopaI}]\label{TheoremPrekopa}
	In order that a finitely additive set function $\xi(A)$ defined on the elements of the ring $\mathcal{R}$ should be countably additive it is necessary and sufficient that, for every non-increasing sequence of sets $B_{1},B_{2},...$ with $B_{k}\in\mathcal{R}$ $(k = 1, 2, . . .)$ and $B_{n}\searrow\emptyset$, $\xi(B_{n})\stackrel{p}{\rightarrow}0$ as $n\rightarrow\infty$.
\end{thm}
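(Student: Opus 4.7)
The plan is to adapt the standard continuity-at-$\emptyset$ characterisation of countable additivity to the random setting, where ``countably additive'' must be read as: for every disjoint sequence $\{A_{n}\}\subset\mathcal{R}$ with $\bigcup_{n}A_{n}\in\mathcal{R}$, the series $\sum_{n}\xi(A_{n})$ converges in probability to $\xi\bigl(\bigcup_{n}A_{n}\bigr)$. With this in hand, both implications reduce to purely set-theoretic bookkeeping inside the ring $\mathcal{R}$, combined with finite additivity; the only probabilistic content is that convergence in probability is preserved under finite sums and that tails of a convergent series vanish in probability.

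For necessity, I would assume $\xi$ is countably additive and pick any non-increasing sequence $B_{1}\supseteq B_{2}\supseteq\cdots$ in $\mathcal{R}$ with $B_{n}\searrow\emptyset$. Since $\bigcap_{n}B_{n}=\emptyset$, the identity
\[
B_{1}=\bigsqcup_{k=1}^{\infty}(B_{k}\setminus B_{k+1})
\]
holds as a disjoint union of elements of $\mathcal{R}$ (ring closure under relative complement). Countable additivity gives $\sum_{k=1}^{N}\xi(B_{k}\setminus B_{k+1})\stackrel{p}{\to}\xi(B_{1})$ as $N\to\infty$, while finite additivity yields the telescoping identity $\xi(B_{1})=\xi(B_{N+1})+\sum_{k=1}^{N}\xi(B_{k}\setminus B_{k+1})$. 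Subtracting produces $\xi(B_{N+1})\stackrel{p}{\to}0$.

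For sufficiency, I would take disjoint $\{A_{n}\}\subset\mathcal{R}$ with $A:=\bigcup_{n=1}^{\infty}A_{n}\in\mathcal{R}$ and set $B_{N}:=A\setminus\bigcup_{n=1}^{N}A_{n}=\bigsqcup_{n=N+1}^{\infty}A_{n}$. Since $\mathcal{R}$ is closed under finite union and relative complement, $B_{N}\in\mathcal{R}$, and clearly $B_{N}\searrow\emptyset$ because the $A_{n}$ partition $A$. The standing hypothesis then gives $\xi(B_{N})\stackrel{p}{\to}0$, and finite additivity delivers
\[
\xi(A)=\sum_{n=1}^{N}\xi(A_{n})+\xi(B_{N}),
\]
so $\sum_{n=1}^{N}\xi(A_{n})\stackrel{p}{\to}\xi(A)$, which is the required countable additivity in probability.

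The main (mild) obstacle is checking that every $B_{k}\setminus B_{k+1}$ and every tail set $B_{N}$ genuinely lies in $\mathcal{R}$ (i.e.\ that the identities above take place inside the ring and not merely in some enlarged $\sigma$-algebra); this is exactly where the ring axioms of closure under finite union and relative complement are invoked, and it is the only place where the structure of $\mathcal{R}$ is used. Everything else is the classical telescoping argument, carried over verbatim with ``convergence in probability'' in place of ``convergence of real numbers''.
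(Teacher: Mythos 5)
The paper does not prove this statement at all: it is recalled verbatim from Pr\'{e}kopa (Theorem 2.1 in the cited 1956 paper) and used as a black box, so there is no in-paper proof to compare against. Your argument is the standard telescoping/continuity-at-$\emptyset$ proof and it is correct as far as it goes: both directions are sound, the set-theoretic identities $B_{1}=\bigsqcup_{k}(B_{k}\setminus B_{k+1})$ and $A=\bigl(\bigsqcup_{n\le N}A_{n}\bigr)\sqcup B_{N}$ do take place inside $\mathcal{R}$ by closure under relative complement and finite union, and convergence in probability passes through the finite-additivity identities exactly as you say.

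The one point you should make explicit is the mode of convergence built into ``countably additive.'' You read it as convergence in probability of the partial sums, which matches the $\stackrel{p}{\rightarrow}$ in the statement and is the reading under which your sufficiency direction closes. But in this paper's own Definition of a random measure (and in Pr\'{e}kopa's notion of a completely additive stochastic set function as used downstream), the series $\sum_{n}\xi(A_{n})$ is required to converge \emph{almost surely} to $\xi(A)$. Your telescoping argument only delivers convergence in probability of the partial sums; upgrading to a.s.\ convergence requires an additional input, typically independence of the $\xi(A_{n})$ together with the L\'{e}vy equivalence theorem (a.s.\ convergence and convergence in probability coincide for series of independent random variables), which is available for independently scattered set functions but not for an arbitrary finitely additive stochastic set function. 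So either state up front that countable additivity is taken in the in-probability sense, or add the independence/L\'{e}vy-equivalence step when the a.s.\ sense is intended.
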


Now, we introduce the concept of a quasi-L\'{e}vy type measure. We start with the following definition, which we recall from \cite{LPS}:
\begin{defn}\label{def1}
	Let $\mathcal{B}_{r}(\mathbb{R}):=\{B\in\mathcal{B}(\mathbb{R})| B \cap(-r, r) = \emptyset\}$ for $r > 0$ and $\mathcal{B}_{0}(\mathbb{R}):= \bigcup_{r>0} \mathcal{B}_{r}(\mathbb{R})$ be the class of all
	Borel sets that are bounded away from zero. Let $\nu : \mathcal{B}_{0}(\mathbb{R})\rightarrow\mathbb{R}$ be a function such that
	$\nu_{|\mathcal{B}_{r}(\mathbb{R})}$ is a finite signed measure for each $r > 0$ and denote the total variation, positive and negative part of $\nu_{|\mathcal{B}_{r}(\mathbb{R})}$ by $|\nu_{|\mathcal{B}_{r}(\mathbb{R})}|$, $\nu^{+}_{|\mathcal{B}_{r}(\mathbb{R})}$ and $\nu^{-}_{|\mathcal{B}_{r}(\mathbb{R})}$ respectively. Then the \textnormal{total variation} $|\nu|$, the \textnormal{positive part} $\nu^{+}$ and the \textnormal{negative part} $\nu^{-}$ of $\nu$ are defined to be the unique measures on $(\mathbb{R},\mathcal{B}(\mathbb{R}))$ satisfying
	\begin{equation*}
	|\nu|(\{0\})=\nu^{+}(\{0\})=\nu^{-}(\{0\})=0
	\end{equation*}
	\begin{equation*}
	\text{and}\quad|\nu|(A)=|\nu_{|\mathcal{B}_{r}(\mathbb{R})}|,\,\,\nu^{+}(A)=\nu_{|\mathcal{B}_{r}(\mathbb{R})}^{+}(A),\,\,\nu^{-}(A)=\nu_{|\mathcal{B}_{r}(\mathbb{R})}^{-}(A),
	\end{equation*}
	for $A\in\mathcal{B}_{r}(\mathbb{R})$, for some $r>0$.
\end{defn}
As mentioned in \cite{LPS}, $\nu$ is not a a signed measure because it is defined on $\mathcal{B}_{0}(\mathbb{R})$, which is not a $\sigma$-algebra. In the case it is possible to extend the definition of $\nu$ to $\mathcal{B}(\mathbb{R})$ such that $\nu$ is a signed measure then we will identify $\nu$ with its extension to $\mathcal{B}(\mathbb{R})$ and speak of $\nu$ as a signed measure. Moreover, the uniqueness of $|\nu|$, $\nu^{+}$ and $\nu^{-}$ is ensured by an application of the Carath\'{e}odory's extension theorem (see Lemma 2.14 in \cite{Pass}). Further, notice that $\mathcal{B}_{0}(\mathbb{R})= \{B\in\mathcal{B}(\mathbb{R}):0\notin \overline{B} \}\neq \{B\in\mathcal{B}(\mathbb{R}):0\notin B \}$ (see Remark 2.6 in \cite{Pass}).
\begin{defn}[quasi-L\'{e}vy type measure, quasi-L\'{e}vy measure, QID distribution, from \cite{LPS}]
	A \textnormal{quasi-L\'{e}vy type measure} is a function $\nu: \mathcal{B}_{0}(\mathbb{R})\rightarrow \mathbb{R}$ satisfying the
	condition in Definition \ref{def1} and such that its total variation $|\nu|$ satisfies $\int_{\mathbb{R}} (1\wedge x^{2} ) |\nu|(dx) <\infty$. Let $\mu$ be a probability distribution on $\mathbb{R}$. We say that $\mu$ is \textnormal{quasi-infinitely divisible} if its characteristic function has a representation
	\begin{equation*}
	\hat{\mu}(\theta)=\exp\left(i\theta \gamma-\frac{\theta^{2}}{2}a+\int_{\mathbb{R}}\left(e^{i\theta x}-1-i\theta\tau(x)\right)\nu(dx)\right)
	\end{equation*}
	where $a, \gamma \in \mathbb{R}$ and $\nu$ is a quasi-L\'{e}vy type measure. The characteristic triplet $(\gamma,a,\nu)$
	of $\mu$ is unique, and $a$ and $\gamma$ are called the \textnormal{Gaussian variance} and the \textnormal{drift} of $\mu$, respectively. A quasi-L\'{e}vy type measure $\nu$ is called \textnormal{quasi-L\'{e}vy measure}, if additionally there exist a quasi-infinitely divisible distribution $\mu$ and some $a,\gamma\in\mathbb{R}$	such that $(\gamma,a,\nu)$ is the characteristic triplet of $\mu$. We call $\nu$ the quasi-L\'{e}vy measure of $\mu$.
\end{defn}
The above definition extend to the $\mathbb{R}^{d}$ case (for $d>1$) as shown in Remark 2.4 in \cite{LPS}. As pointed out in Example 2.9 of \cite{LPS}, a quasi-L\'{e}vy measure is always a quasi-L\'{e}vy type measure, while the converse is not true. Moreover, we say that a function $f$ is \textit{integrable with respect to quasi-L\'{e}vy type measure} $\nu$ if it is integrable with respect to $|\nu|$. Then, we define:
\begin{equation*}
\int_{B}fd\nu:=\int_{B}fd\nu^{+}-\int_{B}fd\nu^{-},\quad B\in\mathcal{B}(\mathbb{R}).
\end{equation*}
In this work we always keep the same order for the elements in the characteristic triplet: the first element is the drift, the second one is the Gaussian variance, and the third one is the (quasi) L\'{e}vy measure.
\begin{defn}
	[QID random measure] Let $\Lambda$ be a random measure. If $\Lambda(A)$ is a QID random variable, for every $A\in\mathcal{S}$, then we call $\Lambda$ a QID random measure.
\end{defn}
\begin{rem}
	By construction any ID random measure is a QID random measure, but the converse is not always true, namely the set of QID random measures is strictly larger than the set of ID random measures.
\end{rem}
Finally, we remark that the practical example one should have in mind when dealing with random measures is the following: a non-negative random measure $\Lambda$ on $\mathcal{B}_{b}(\mathbb{R})$ (\textit{i.e.}~the set of bounded intervals of $\mathbb{R}$), which has almost surely finite values for any $B\in\mathcal{B}_{b}(\mathbb{R})$. In this example, using the notation of this work, we have that $S=\mathbb{R}$ and ${\mathcal {S}}=\mathcal{B}_{b}(\mathbb{R})$. Then, under certain conditions (\textit{i.e.}~$\Lambda$ is independently scattered) and parametrisations (\textit{i.e.}~$\Lambda([0,t])$) it is possible to associate an additive stochastic process to $\Lambda$. In particular, let $X_{t}\stackrel{a.s.}{=}\Lambda([0,t])$ then $(X_{t})_{t\in[0,\infty)}$ is a non-negative additive process. Observe that $\mathcal{B}_{b}(\mathbb{R})$ is not an algebra because $\mathbb{R}\notin \mathcal{B}_{b}(\mathbb{R})$ and is not a $\sigma$-ring, but a $\delta$-ring, because it is not closed under countable union. From this example, it also appears clear and natural the condition that imposes the existence of an increasing sequence of sets $S_{1},S_{2},\dots \in \mathcal{B}_{b}(\mathbb{R})$ s.t.~$\bigcup _{n\in \mathbb {N} }S_{n}=S$; indeed, think of $S_{n}$ as concentric balls of radii $n$, namely $(-n,n)$. 
\section{Extension of signed bimeasures and existence of relative kernels}\label{Sec-Spectral}
In this section, the necessary and sufficient condition for the extension of signed bimeasures on $\delta$-rings. This result extends the classical results in \cite{Horo} on the extension of signed bimeasures and Theorem 5.18 in \cite{Pass} on the Jordan decomposition of such extension. In particular, in \cite{Horo} the author shows that there exists a signed measure on the product space, under the assumption that the bimeasure is a bimeasure on the Cartesian product of two $\sigma$-algebra and that it satisfies a boundedness conditions. Here, we weaken the first part of the assumption (without strengthening the other one). Indeed, we work with spaces which are not necessarily measurable. 

Moreover, in \cite{Horo} there is no Jordan decomposition of the extended signed measure. Such decomposition has been recently proved in \cite{Pass} (see Theorem 5.18 in \cite{Pass}). Here, we prove the same result but under weaker assumptions.

Theorem \ref{pr-monster} provides a general and complete ``signed" version of the fundamental results at the base of Rajput and Rosinski's work \cite{RajRos} (see Lemma 2.3 and Proposition 2.4 and all the results in their proofs). Theorem \ref{pr-monster} can also be seen as the extension of the construction method of regular conditional probabilities when the probability measure is a signed measure.

Let us now recall the following results proved in \cite{Pass}.
\begin{lem}[Lemma 2.15 in \cite{Pass}]\label{215}
	Let $X$ an arbitrary non-empty set and let $\mathcal{R}$ be a $\delta$-ring. Then, for every $E\in\mathcal{R}$ we have that $\{E\cap B:B\in\mathcal{S}\}$ is a $\sigma$-algebra.
\end{lem}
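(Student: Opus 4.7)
The plan is to verify directly that $\Sigma_E := \{E \cap B : B \in \mathcal{R}\}$ satisfies the three axioms of a $\sigma$-algebra on the ground set $E$. The first step is to reidentify $\Sigma_E$ with the more convenient collection $\{A \in \mathcal{R} : A \subset E\}$. Indeed, for any $B \in \mathcal{R}$, the set $E \setminus B$ belongs to $\mathcal{R}$ by relative complementation, and therefore so does $E \setminus (E \setminus B) = E \cap B$; conversely, every $A \in \mathcal{R}$ with $A \subset E$ is of the form $E \cap A$. Hence $\Sigma_E = \{A \in \mathcal{R} : A \subset E\}$, and in particular every element of $\Sigma_E$ is itself an element of $\mathcal{R}$.

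Next I would check the $\sigma$-algebra axioms. The total set $E$ lies in $\Sigma_E$ since $E = E \cap E$ and $E \in \mathcal{R}$. Closure under complementation in $E$ is immediate: for $A \in \Sigma_E$, the set $E \setminus A$ lies in $\mathcal{R}$ by the ring axiom of relative complementation, and is trivially a subset of $E$, hence lies in $\Sigma_E$.

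The crucial step is closure under countable unions, and this is where the $\delta$-ring structure (rather than mere ring structure) is used. Given a sequence $(A_n)_{n\in\mathbb{N}} \subset \Sigma_E$, each $E \setminus A_n$ lies in $\mathcal{R}$ by the previous point. Since $\mathcal{R}$ is closed under countable intersections, $\bigcap_{n} (E \setminus A_n) \in \mathcal{R}$. Applying relative complementation with $E$ and invoking De Morgan,
\begin{equation*}
\bigcup_{n\in\mathbb{N}} A_n \;=\; E \setminus \bigcap_{n\in\mathbb{N}} (E \setminus A_n) \;\in\; \mathcal{R},
\end{equation*}
and this union is plainly a subset of $E$, so it belongs to $\Sigma_E$.

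The only non-routine ingredient is this last step: turning countable intersections (available in a $\delta$-ring) into countable unions by passing twice through relative complements inside the fixed ``ambient'' set $E \in \mathcal{R}$. Everything else is a direct application of the defining axioms of a ring. Since this is short, I would present the above as a single proof with the three axioms verified in order, emphasising the De Morgan identity as the key observation.
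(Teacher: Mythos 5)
Your proof is correct and is the standard argument for this fact; the paper itself does not reprove the lemma (it is quoted from \cite{Pass}), and your route --- identifying $\{E\cap B: B\in\mathcal{R}\}$ with $\{A\in\mathcal{R}: A\subset E\}$ and obtaining countable unions from countable intersections via De Morgan inside the ambient set $E$ --- is exactly the intended one. You also correctly read the evident typo $\mathcal{S}$ in the statement as $\mathcal{R}$.
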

\begin{lem}[Lemma 2.16 in \cite{Pass}]
	Let $X$ an arbitrary non-empty set and let $\mathcal{R}$ a $\delta$-ring. Let $\mu$ be a (possibly infinite) signed measure on $\mathcal{R}$. Then, there exist two unique measures $\mu^{+}$ and $\mu^{-}$ on $\mathcal{R}$ such that $\mu=\mu^{+}-\mu^{-}$ and that on any $A\in\mathcal{R}$ they are mutually singular.
\end{lem}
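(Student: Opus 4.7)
The strategy is to apply the classical Hahn--Jordan decomposition locally on each $\sigma$-algebra $\Sigma_E := \{E\cap B : B\in\mathcal{R}\}$ supplied by Lemma \ref{215}, and then glue the local decompositions into a globally defined pair $(\mu^+,\mu^-)$ on $\mathcal{R}$. Note that since $\mathcal{R}$ is a $\delta$-ring and $E\cap B\in\mathcal{R}$ for $B\in\mathcal{R}$, we in fact have $\Sigma_E = \{C\in\mathcal{R}: C\subseteq E\}\subseteq\mathcal{R}$, so $\mu$ restricted to $\Sigma_E$ is a genuine extended signed measure on a $\sigma$-algebra in the sense of Definition \ref{Def-signedmeasure}. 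The classical Hahn--Jordan theorem then yields unique mutually singular measures $\mu_E^+,\mu_E^-$ on $\Sigma_E$ with $\mu_E^+-\mu_E^- = \mu|_{\Sigma_E}$.

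The key step is compatibility: I would show that if $E_1\subseteq E_2$ with $E_1,E_2\in\mathcal{R}$, then $\Sigma_{E_1}\subseteq\Sigma_{E_2}$ and $\mu_{E_2}^\pm|_{\Sigma_{E_1}}=\mu_{E_1}^\pm$. For this, pick a Hahn set $P\in\Sigma_{E_2}$ for $\mu|_{\Sigma_{E_2}}$; then $P\cap E_1\in\Sigma_{E_1}$ and one checks directly that $\mu_{E_2}^-(P\cap E_1)\le\mu_{E_2}^-(P)=0$ and $\mu_{E_2}^+(E_1\setminus(P\cap E_1))\le\mu_{E_2}^+(E_2\setminus P)=0$, so the restrictions remain mutually singular; uniqueness of the local Jordan decomposition on $\Sigma_{E_1}$ forces the agreement. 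For arbitrary $E_1,E_2\in\mathcal{R}$, since $E_1\cup E_2\in\mathcal{R}$ (ring property), both sit inside $\Sigma_{E_1\cup E_2}$ and consistency follows by the previous case. This is the step I expect to be the main obstacle, since one must be careful that mutual singularity is preserved under restriction and that the classical uniqueness can be imported.

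With consistency in hand, define $\mu^\pm(A):=\mu_A^\pm(A)$ for $A\in\mathcal{R}$. To verify $\sigma$-additivity, let $A=\bigsqcup_n A_n$ with $A,A_n\in\mathcal{R}$; then $A_n\in\Sigma_A$ so
\begin{equation*}
\mu^+(A)=\mu_A^+(A)=\sum_n \mu_A^+(A_n)=\sum_n \mu_{A_n}^+(A_n)=\sum_n \mu^+(A_n),
\end{equation*}
using the consistency to replace $\mu_A^+$ by $\mu_{A_n}^+$ on $A_n$; likewise for $\mu^-$. The relation $\mu=\mu^+-\mu^-$ on $\mathcal{R}$ is immediate evaluating at each $A\in\mathcal{R}$ via $\Sigma_A$, and mutual singularity on any given $A\in\mathcal{R}$ is inherited from the Hahn decomposition of $\mu|_{\Sigma_A}$. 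For uniqueness, suppose $\nu^+,\nu^-$ is another such pair; restricting to $\Sigma_A$ for arbitrary $A\in\mathcal{R}$ produces a mutually singular Jordan decomposition of $\mu|_{\Sigma_A}$, which by the classical uniqueness coincides with $(\mu_A^+,\mu_A^-)$, hence $\nu^\pm=\mu^\pm$ pointwise on $\mathcal{R}$.
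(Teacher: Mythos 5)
The paper does not actually prove this lemma---it is recalled verbatim from \cite{Pass} and stated without proof, immediately after Lemma \ref{215}, which is precisely the tool you invoke. Your argument (restrict $\mu$ to the $\sigma$-algebras $\Sigma_E=\{E\cap B:B\in\mathcal{R}\}\subseteq\mathcal{R}$ supplied by Lemma \ref{215}, apply the classical Hahn--Jordan decomposition there, establish consistency of the local decompositions via $E_1\cup E_2\in\mathcal{R}$ and uniqueness, then glue and read off $\sigma$-additivity, mutual singularity on each $A$, and uniqueness) is correct and is the natural, evidently intended route.
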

\begin{lem}[Lemma 2.18 in \cite{Pass}]
	Let $X$ an arbitrary non-empty set and let $\mathcal{R}$ a $\delta$-ring. Let $\mu$ be a $\sigma$-finite signed measure on $\mathcal{R}$ (namely there exists a sequence $S_{1},S_{2},...\in\mathcal{R}$ s.t.~$X=\cup_{n=1}^{\infty}S_{n}$ and that $-\infty<\mu(S_{n})<\infty$ for every $n\in\mathbb{N}$). Then $\mu^{+}$ and $\mu^{-}$ can be uniquely extended to two $\sigma$-finite measures on $(X,\sigma(\mathcal{R}))$.
\end{lem}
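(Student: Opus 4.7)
The plan is to reduce the problem to the classical Carath\'{e}odory extension theorem applied separately to the two positive measures $\mu^{+}$ and $\mu^{-}$ supplied by the preceding Lemma 2.16. Since a $\delta$-ring is in particular a ring and $\mu^{+},\mu^{-}$ are already $[0,\infty]$-valued measures on $\mathcal{R}$ by Lemma 2.16, the whole task reduces to (a) showing that $\mu^{+}$ and $\mu^{-}$ are each $\sigma$-finite on $\mathcal{R}$ with the exhausting sequence $S_{1},S_{2},\dots$ provided by the hypothesis, and then (b) applying Carath\'{e}odory's theorem to each of them to obtain the unique $\sigma$-finite extensions to $\sigma(\mathcal{R})$.

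The substantive step is (a): we need $\mu^{+}(S_{n})<\infty$ and $\mu^{-}(S_{n})<\infty$ for every $n$. First, Lemma 2.15 guarantees that $\{S_{n}\cap B:B\in\mathcal{R}\}$ is a $\sigma$-algebra, all of whose elements lie in $\mathcal{R}$, so the restriction of $\mu$ to this $\sigma$-algebra is a signed measure in the classical sense. By the mutual singularity statement of Lemma 2.16 applied to the set $S_{n}\in\mathcal{R}$, there is a decomposition $S_{n}=P_{n}\sqcup N_{n}$ with $P_{n},N_{n}\in\mathcal{R}$ on which $\mu^{-}(P_{n})=0=\mu^{+}(N_{n})$; hence $\mu^{+}(S_{n})=\mu(P_{n})$ and $\mu^{-}(S_{n})=-\mu(N_{n})$. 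Since Definition 2.1 forbids $\mu$ from taking both $+\infty$ and $-\infty$, at least one of these two real-valued candidates is finite; combined with the hypothesis $\mu(S_{n})=\mu(P_{n})+\mu(N_{n})\in\mathbb{R}$, both $\mu(P_{n})$ and $\mu(N_{n})$ must be finite, yielding $\mu^{+}(S_{n}),\mu^{-}(S_{n})<\infty$ as required.

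Step (b) is then an immediate invocation of the Carath\'{e}odory extension theorem recalled earlier: each of $\mu^{+},\mu^{-}$ is a $\sigma$-finite measure on the ring $\mathcal{R}$, so each admits a unique $\sigma$-finite extension to $\sigma(\mathcal{R})$. The main (and in fact only) delicate point in the argument is the propagation of finiteness from $\mu(S_{n})$ to both $\mu^{+}(S_{n})$ and $\mu^{-}(S_{n})$; once Lemma 2.15 and the non-simultaneous-infinity clause of Definition 2.1 are in hand, this is a short book-keeping step and the rest of the proof is essentially classical.
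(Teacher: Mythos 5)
The paper does not prove this lemma; it is recalled verbatim from Lemma 2.18 of \cite{Pass}, so there is no in-paper argument to compare against. Your proof is correct and follows the natural route one would expect that reference to take: Lemma 2.16 already supplies $\mu^{+},\mu^{-}$ as measures on the ring, the restriction to the $\sigma$-algebra $\mathcal{R}\cap S_{n}$ (Lemma 2.15) together with the Hahn decomposition and the no-simultaneous-$\pm\infty$ clause forces $\mu^{+}(S_{n}),\mu^{-}(S_{n})<\infty$, and Carath\'{e}odory then yields the two unique $\sigma$-finite extensions.
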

From the above it is possible to see that $|\mu|=\mu^{+}+\mu^{-}$ is the total variation of $\mu$. Moreover, recall that a measurable space $(X,\mathcal{B})$ is a Lusin measurable space if $X$ is measure-theoretically isomorphic to a Borel subset of a compact metric space, and $\mathcal{B}$ is the induced $\sigma$-algebra. Thus, from definition we have that $\mathcal{B}$ is separable and that any standard Borel space is a Lusin measurable space.

\begin{thm}\label{pr-monster}
	Let $(X,\mathcal{B})$ be a Lusin measurable space and let $(T,\mathcal{A})$ be such that $T$ is an arbitrary non-empty set and $\mathcal{A}$ is a $\delta$-ring with the additional condition that there exists an increasing sequence of sets $T_{1},T_{2},\dots \in {\mathcal {A}}$ s.t.~$\bigcup _{n\in \mathbb {N} }T_{n}=T$. Let $Q_{0}(A,B)$ be a (possibly real valued) set function of $A\in\mathcal{A}$, $B\in\mathcal{B}$, satisfying:
	\\ \textnormal{(a)} for every $A\in\mathcal{A}$, $Q_{0}(A,\cdot)$ is a signed measure on $(X,\mathcal{B})$,
	\\ \textnormal{(b)} for every $B\in\mathcal{B}$, $Q_{0}(\cdot, B)$ is a signed measure on $(T,\mathcal{A})$,
	\\ \textnormal{(c)} $\sup\limits_{I_{A}}\sum_{i\in I_{A}}|Q_{0}(A_{i},B_{i})|<\infty$, for every $A\in\mathcal{A}$, where the supremum is taken over all the finite families of disjoints elements of $(\mathcal{A}\cap A)\times\mathcal{B}$.
	\\Let $\nu(A):=\sup\limits_{I_{A}}\sum_{i\in I_{A}}|Q_{0}(A_{i},B_{i})|$, where $A\in\mathcal{A}$. Then, $\nu(\cdot)$ has a unique extension on $(T,\sigma(\mathcal{A}))$. Further, there exist two unique measures $Q^{+}$ and $Q^{-}$ on $\sigma(\mathcal{A})\otimes\mathcal{B}$ s.t.
	\begin{equation}\label{+and-}
	Q^{+}(C)=\int_{T}\int_{X}\textbf{1}_{C}(x,t)q^{+}(t,dx)\nu(dt)\quad\text{and}\quad Q^{-}(C)=\int_{T}\int_{X}\textbf{1}_{C}(x,t)q^{-}(t,dx)\nu(dt),
	\end{equation}
	where $C\in \sigma(\mathcal{A})\otimes\mathcal{B}$. Moreover, there exists a unique finite signed measure on the $\delta$-ring $\bigcup_{D\in\mathcal{A}}(\mathcal{A}\cap D)\otimes\mathcal{B}$ s.t. \begin{equation}\label{formula-in-the-moster}
	Q(A\times B)=Q_{0}(A,B)=\int_{A}q(t,B)\nu(dt),
	\end{equation}
	for every $A\in\mathcal{A}$, $B\in\mathcal{B}$,  where $q:T\times\mathcal{B}\rightarrow[-1,1]$ and $q^{+},q^{-}:T\times\mathcal{B}\rightarrow[0,1]$ fulfil the following conditions:
	\\ \textnormal{(d)} for every $t\in T$, $q(t,\cdot)$ is a signed measure on $\mathcal{B}$,
	\\ \textnormal{(e)} for every $B\in\mathcal{B}$, $q(\cdot,B)$ is $\sigma(\mathcal{A})$-measurable,
	\\ \textnormal{(d)$^{\prime}$} $q^{+}(t,\cdot)$ and $q^{-}(t,\cdot)$ are the Jordan decomposition of $q(t,\cdot)$, \\ \textnormal{(e)$^{\prime}$} $q^{+}(\cdot,B)$ and $q^{-}(\cdot,B)$ are $\sigma(\mathcal{A})$-measurable functions.
	\\ Further, if $q_{1}(\cdot,\cdot)$ is some other function satisfying $(\ref{formula-in-the-moster})$, \textnormal{(d)} and \textnormal{(e)}, then off a set of $\nu$-measure zero, $q_{1}(t,\cdot)=q(t,\cdot)$. Similarly, if $q_{1}^{+}(\cdot,\cdot)$ and $q_{1}^{-}(\cdot,\cdot)$ are some other function satisfying $(\ref{+and-})$, \textnormal{(d)'} and \textnormal{(e)'}, then off a set of $\nu$-measure zero, $q_{1}^{+}(t,\cdot)=q^{+}(t,\cdot)$ and $q_{1}^{-}(t,\cdot)=q^{-}(t,\cdot)$. Finally, condition \textnormal{(c)} is a necessary and sufficient condition for the existence of $Q$.
\end{thm}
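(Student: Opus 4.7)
The strategy is to reduce the $\delta$-ring problem to a family of $\sigma$-algebra problems, apply Theorem 5.18 of \cite{Pass} at each level, and glue the local objects via uniqueness. For each $D\in\mathcal{A}$, Lemma \ref{215} shows that $\mathcal{A}_{D}:=\{A\cap D:A\in\mathcal{A}\}$ is a $\sigma$-algebra on $D$, and the restriction of $Q_{0}$ to $\mathcal{A}_{D}\times\mathcal{B}$ is a signed bimeasure on a product of $\sigma$-algebras whose Fr\'echet--Vitali variation on $D$ is finite by condition (c). Since $(X,\mathcal{B})$ is Lusin, Theorem 5.18 of \cite{Pass} applies at level $D$ and yields a finite signed measure $Q_{D}$ on $\mathcal{A}_{D}\otimes\mathcal{B}$, its unique Jordan decomposition $Q_{D}^{\pm}$, a finite total-variation measure $\nu_{D}$ on $\mathcal{A}_{D}$ coinciding with the sup formula defining $\nu$ on $\mathcal{A}_{D}$, and sub-Markovian kernels $q_{D},q_{D}^{\pm}$ on $D\times\mathcal{B}$ satisfying the level-$D$ versions of (d), (e), (d$'$), (e$'$).

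For $D_{1}\subseteq D_{2}$ in $\mathcal{A}$, the restrictions $Q_{D_{2}}|_{\mathcal{A}_{D_{1}}\otimes\mathcal{B}}$ and $Q_{D_{1}}$ agree on all measurable rectangles (both equal $Q_{0}(A,B)$) and hence on all of $\mathcal{A}_{D_{1}}\otimes\mathcal{B}$ by the levelwise uniqueness; the same compatibility holds for $\nu_{D}$ and, modulo $\nu$-null sets, for the kernels. Setting $Q(C):=Q_{D}(C)$ for any $D\in\mathcal{A}$ with $C\in\mathcal{A}_{D}\otimes\mathcal{B}$ therefore defines an unambiguous finite signed measure on the $\delta$-ring $\bigcup_{D\in\mathcal{A}}(\mathcal{A}\cap D)\otimes\mathcal{B}$, with $\sigma$-additivity inherited from the levels. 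Specialising to the exhaustion $T_{1}\subseteq T_{2}\subseteq\cdots$, the measures $\nu_{T_{n}}$ glue into a $\sigma$-finite measure $\nu$ on $\mathcal{A}$ (since $\nu(T_{n})<\infty$), which Carath\'eodory's theorem extends uniquely to $\sigma(\mathcal{A})$.

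For the global kernels I would paste $q^{(n)}:=q_{T_{n}}$ and $q^{(n)\pm}:=q_{T_{n}}^{\pm}$ by selecting representatives so that all the pairwise compatibility relations hold pointwise off a single $\nu$-null exceptional set, and redefining the kernels to vanish on that set. The identity (\ref{formula-in-the-moster}) then follows rectangle by rectangle from $\sigma$-additivity of $Q_{0}(\cdot,B)$ on $\mathcal{A}$ together with monotone convergence along $A\cap T_{n}\nearrow A$. Defining $Q^{\pm}$ on $\sigma(\mathcal{A})\otimes\mathcal{B}$ via the integral formulas (\ref{+and-}) produces measures that are mutually singular (they are carried by the pointwise disjoint supports of $q^{+}(t,\cdot)$ and $q^{-}(t,\cdot)$) and whose difference agrees with $Q$ on the generating $\delta$-ring, so they are the unique Jordan components of the natural extension of $Q$ to $\sigma(\mathcal{A})\otimes\mathcal{B}$ on $\sigma$-finite sets.

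Uniqueness of $q,q^{\pm},Q,Q^{\pm}$ modulo $\nu$-null sets is inherited directly from the levelwise uniqueness in \cite{Pass}. Necessity of (c) is immediate: if a finite signed measure $Q$ extending $Q_{0}$ exists on the $\delta$-ring, then for every $A\in\mathcal{A}$ its restriction to the $\sigma$-algebra $\mathcal{A}_{A}\otimes\mathcal{B}$ (Lemma \ref{215}) is a finite signed measure, so $|Q|(A\times X)<\infty$ and dominates $\sum_{i}|Q_{0}(A_{i},B_{i})|$ over all finite disjoint families in $(\mathcal{A}\cap A)\times\mathcal{B}$. The most delicate step, where I expect the real work to be, is the kernel-gluing: selecting coherent $\sigma(\mathcal{A})$-measurable versions of $q^{(n)}$ and $q^{(n)\pm}$ simultaneously for all $n$ so as to preserve the pointwise Jordan decomposition off a single $\nu$-null set and to maintain the range conditions $q\in[-1,1]$ and $q^{\pm}\in[0,1]$ — here the Lusin hypothesis on $(X,\mathcal{B})$ is essential, as it underwrites both the existence of regular kernels at each level and the measurable selection needed to stitch the Jordan decompositions together.
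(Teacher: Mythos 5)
Your proposal is correct and follows essentially the same route as the paper: reduce to the $\sigma$-algebra case via Lemma \ref{215}, apply Theorem 5.18 of \cite{Pass} on each level of the exhaustion $T_{1}\subseteq T_{2}\subseteq\cdots$, extend $\nu$ by Carath\'eodory, and obtain necessity of (c) from the total variation of $Q$ on each $(\mathcal{A}\cap A)\otimes\mathcal{B}$. The only difference is that the step you flag as delicate — stitching the kernels together — is handled in the paper without any representative selection or null-set surgery, simply by setting $q^{\pm}(t,\cdot):=q_{n}^{\pm}(t,\cdot)$ for $t\in T_{n}\setminus T_{n-1}$, which is pointwise unambiguous and whose $\sigma(\mathcal{A})$-measurability follows from writing the preimage as a countable union over the annuli.
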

\begin{proof}
	Since $\sup\limits_{I_{T_{n}}}\sum_{i\in I_{T_{n}}}|Q_{0}(A_{i},B_{i})|<\infty$ then we have that $\nu(T_{n})<\infty$ for every $n\in\mathbb{N}$. By Lemma \ref{215} we know that $(A,\mathcal{A}\cap A)$ is a $\sigma$-algebra, and by Theorem 4 in \cite{Horo} (see also Theorem 5.17 in \cite{Pass}) we know that $\nu(\cdot)$ is a finite measure on $(A,\mathcal{A}\cap A)$, for every $A\in\mathcal{A}$. Then, $\nu(\cdot)$ is a measure on $\mathcal{A}$ and by the Carath\'{e}odory's extension theorem we know that there exists a unique $\sigma$-finite extension on $\sigma(\mathcal{A})$, which we still denote it by $\nu$. 
	
	Consider the measurable space $(T_{n},\mathcal{A}\cap T_{n})$. By Theorem 5.18 in \cite{Pass} we have that there exist two unique finite measures
	\begin{equation*}
	Q_{n}^{+}(C)=\int_{T_{n}}\int_{X}\textbf{1}_{C}(x,y)q_{n}^{+}(x,dy)\nu(dx)\,\,\,\,\,\textnormal{and} \,\,\,\,\, Q_{n}^{-}(C)=\int_{T_{n}}\int_{X}\textbf{1}_{C}(x,y)q_{n}^{-}(x,dy)\nu(dx),
	\end{equation*}
	and a unique finite signed measure
	\begin{equation*}
	Q_{n}(C)=\int_{T_{n}}\int_{X}\textbf{1}_{C}(x,y)q_{n}(x,dy)\nu(dx),
	\end{equation*}	
	where $C\in (T_{n},\mathcal{A}\cap T_{n})\otimes(X,\mathcal{B})$, $q_{n}^{+}$ and $q_{n}^{-}$ are two sub-Markovian kernels such that for every $x\in T_{n}$ they are the Jordan decomposition of a finite signed measure $q_{n}$. In particular, we have that for very  $A\in \mathcal{A}\cap T_{n}$ and $B\in\mathcal{B}$
	\begin{equation*}
	Q_{0}(A,B)=Q_{n}(A\times B)=\int_{A}q_{n}(x,B)\nu(dx).
	\end{equation*}
	
	Observe that the above holds for any $n\in\mathbb{N}$. Now, we want to concatenate the sequence of obtained $q^{+}_{n}$'s into one measure. For this purpose, let $q^{+}(x,B)=q_{n}^{+}(x,B)$ when $x\in T_{n}\setminus T_{n-1}$ and $b\in\mathcal{B}$. Then, it is possible to see that $q^{+}(x,\cdot)$ is a measure for every $x\in T$. This is because for every $x\in T$ there exists a $n\in\mathbb{N}$ large enough such that $x\in T_{n}\setminus T_{n-1}$ and so $q^{+}(x,\cdot)=q^{+}_{n}(x,\cdot)$, and we know that $q^{+}_{n}(x,\cdot)$ is a measure on $(X,\mathcal{B})$. It is also possible to see that $q^{+}(\cdot,B)$ is a $\sigma(\mathcal{A})$-measurable function, namely that for every $B\in\mathcal{B}$ and every $A\in\mathcal{B}(\mathbb{R})$ we have that $(q^{+}(\cdot,B))^{-1}(A)\in\sigma(\mathcal{A})$. Indeed, consider any $A\in\mathcal{B}(\mathbb{R})$ and $B\in\mathcal{B}$, then
	\begin{equation*}
	(q^{+}(\cdot,B))^{-1}(A)=\{x\in T\,\,|\,\,q^{+}(x, B)\in A \}=\bigcup_{n=1}^{\infty}\{x\in T_{n}\setminus T_{n-1}\,\,|\,\,q_{n}^{+}(x, B)\in A \}
	\end{equation*}
	\begin{equation*}
	=\bigcup_{n=1}^{\infty}\{x\in T_{n}\,\,|\,\,q_{n}^{+}(x, B)\in A \}\setminus T_{n-1}.
	\end{equation*}
	Further, since $q_{n}$ is a $\mathcal{A}\cap T_{n}$-measurable functions and $T_{n-1}\in \mathcal{A}\cap T_{n}$, then $\{x\in T_{n}\,\,|\,\,q_{n}^{+}(x, B)\in A \}\setminus T_{n-1}\in \mathcal{A}\cap T_{n}$ and since $\mathcal{A}\cap T_{n}\subset\sigma(\mathcal{A})$ then $\{x\in T_{n}\,\,|\,\,q_{n}^{+}(x, B)\in A \}\setminus T_{n-1}\in \sigma(\mathcal{A})$. Therefore, since $\sigma$-algebras are closed under countable unions we have that $\bigcup_{n=1}^{\infty}\{x\in T_{n}\,\,|\,\,q_{n}^{+}(x, B)\in A \}\setminus T_{n-1}\in\sigma(\mathcal{A})$ and so that $q^{+}(\cdot,B)$ is a $\sigma(\mathcal{A})$-measurable function, for every $B\in\mathcal{B}$. 
	
	Similarly we can define $q^{-}$ and by applying the same arguments we have that $q^{-}_{n}(x,\cdot)$ is a measure on $(X,\mathcal{B})$ and $q^{-}(\cdot,B)$ is a $\sigma(\mathcal{A})$-measurable function. Then, it is possible to define two (possibly infinite) measures $Q^{+}$ and $Q^{-}$ on $\sigma(\mathcal{A})\otimes\mathcal{B}$ by
	\begin{equation*}
	Q^{+}(C)=\int_{T}\int_{X}\textbf{1}_{C}(x,y)q^{+}(x,dy)\nu(dx)\,\,\,\,\,\textnormal{and} \,\,\,\,\, Q^{-}(C)=\int_{T}\int_{X}\textbf{1}_{C}(x,y)q^{-}(x,dy)\nu(dx),
	\end{equation*}
	where $C\in \sigma(\mathcal{A})\otimes\mathcal{B}$.
	
	Notice that since $q^{+}_{n}(x,B)\leq 1$ and $q^{-}_{n}(x,B)\leq 1$ for every $x\in T_{n}$ and $B\in\mathcal{B}$ and since this holds for every $n\in\mathbb{N}$, then $q^{+}(x,B)\leq 1$ and $q^{-}(x,B)\leq 1$ for every $x\in T$ and $B\in\mathcal{B}$. In other words, $q^{+}$ and $q^{-}$ are sub-Markovian kernels. Then we can define $q$ to be $q(x,B)=q^{+}(x,B)-q^{-}(x,B)$ and notice that for every $x\in T$ $q^{+}(x,\cdot)$ and $q^{-}(x,\cdot)$ are the Jordan decomposition of $q(x,\cdot)$, and that for every $n\in\mathbb{N}$ we have $q(x,B)=q_{n}(x,B)$ for every $x\in T_{n}$ and $B\in\mathcal{B}$. 
	
	Therefore, by putting together the results obtained so far we have, for every $A\in \mathcal{A}$ and $B\in\mathcal{B}$, that $Q_{0}(A,B)<\infty$ and in particular that
	\begin{equation*}
	Q_{0}(A,B)=\sum_{n=1}^{\infty}Q_{0}(A\cap T_{n}\setminus T_{n-1},B)=\sum_{n=1}^{\infty}\int_{A\cap T_{n}\setminus T_{n-1}}q_{n}(x,B)\nu(dx)
	\end{equation*}
	\begin{equation*}
	=\sum_{n=1}^{\infty}\int_{A\cap T_{n}\setminus T_{n-1}}q(x,B)\nu(dx)=\int_{A}q(x,B)\nu(dx)=Q^{+}(A\times B)-Q^{-}(A\times B).
	\end{equation*}
	
	Now, observe that it is possible to define a real valued countably additive set function $Q$ on $\bigcup_{D\in\mathcal{A}}(\mathcal{A}\cap D)\otimes\mathcal{B}$ by setting $Q(C)=Q^{+}(C)-Q^{-}(C)$, namely
	\begin{equation*}
	Q(C)=\int_{T}\int_{X}\textbf{1}_{C}(x,y)q(x,dy)\nu(dx),\quad C\in \bigcup_{D\in\mathcal{A}}(\mathcal{A}\cap D)\otimes\mathcal{B}.
	\end{equation*}
	Indeed, consider any $D\in\mathcal{A}$. For every $A\in \mathcal{A}\cap D$ and $B\in\mathcal{B}$ we have that
	\begin{equation*}
	Q_{0}(A,B)=\int_{A}q(x,B)\nu(dx)=Q(A\times B).
	\end{equation*}
	Therefore, following Thoerem 5.18 in \cite{Pass} $Q$ is the unique finite signed measure on $(D\cap\mathcal{A})\otimes\mathcal{B}$ s.t.
	\begin{equation*}
	Q(C)=\int_{T}\int_{X}\textbf{1}_{C}(x,y)q(x,dy)\nu(dx),\quad C\in (D\cap\mathcal{A})\otimes\mathcal{B}.
	\end{equation*}
	Indeed, it is possible to see this also by the following arguments. For every $D\in\mathcal{A}$ consider the measurable space $(D,\mathcal{A}\cap D)$ and by applying Theorem 5.18 in \cite{Pass} we obtain $Q_{D}$, $Q_{D}^{+}$, $Q_{D}^{-}$, $q^{+}_{D}$, $q^{-}_{D}$ and $q_{D}$ (as we have done before when we obtained $Q_{n}$, $Q_{n}^{+}$, $Q_{n}^{-}$, $q^{+}_{n}$, $q^{-}_{n}$ and $q_{n}$). Then, we would have that for every $A\in \mathcal{A}\cap D$ and $B\in\mathcal{B}$
	\begin{equation*}
	\int_{A}q_{D}(x,B)\nu(dx)=Q_{0}(A,B)=\int_{A}q(x,B)\nu(dx)
	\end{equation*}
	and so $q_{D}(x,\cdot)=q(x,\cdot)$ off a set of $\nu$-measure zero on $D$. Thus, for every $C\in (D\cap\mathcal{A})\otimes\mathcal{B}$
	\begin{equation*}
	Q_{D}(C)=\int_{T}\int_{X}\textbf{1}_{C}(x,y)q_{D}(x,dy)\nu(dx)=\int_{T}\int_{X}\textbf{1}_{C}(x,y)q(x,dy)\nu(dx)=Q(C).
	\end{equation*}
	
	Finally, we prove uniqueness. If $q_{1}(\cdot,\cdot)$ is some other function satisfying $(\ref{formula-in-the-moster})$, \textnormal{(d)} and \textnormal{(e)}, then, off a set of $\nu$-measure zero, $q_{1}(t,\cdot)=q(t,\cdot)$ when $t\in T_{n}$. Thus,
	\begin{equation*}
	\int_{T_{n}}q(x,B)\nu(dx)-\int_{T_{n}}q_{1}(x,B)\nu(dx)=0.
	\end{equation*}
	Since this holds for every $n\in\mathbb{N}$, we have
	\begin{equation*}
	\int_{T}q(x,B)-q_{1}(x,B)\nu(dx)=\lim\limits_{n\rightarrow\infty}\int_{T_{n}}q(x,B)-q_{1}(x,B)\nu(dx)=0,
	\end{equation*}
	for every $B\in\mathcal{B}$. Hence, off a set of $\nu$-measure zero, $q_{1}(t,\cdot)=q(t,\cdot)$, thus we get the uniqueness of $Q$. Now, from this and from the uniqueness of the Jordan decomposition we deduce that, off a set of $\nu$-measure zero, $q^{+}_{1}(t,\cdot)=q^{+}(t,\cdot)$ and $q^{-}_{1}(t,\cdot)=q^{-}(t,\cdot)$, whence we obtain the uniqueness of $Q^{+}$ and $Q^{-}$.
	
	The condition (c) is necessary because $Q$ is a finite signed measure on $\bigcup_{D\in\mathcal{A}}(\mathcal{A}\cap D)\otimes\mathcal{B}$, hence for every $A\in\mathcal{A}$ the total variation of $Q$, where $Q$ considered as a signed measure on the $\sigma$-algebra $(\mathcal{A}\cap A)\otimes\mathcal{B}$ is larger than $\nu(A)$.
\end{proof}
\begin{rem}
	\textnormal{First, notice that $\bigcup_{D\in\mathcal{A}}(\mathcal{A}\cap D)\otimes\mathcal{B}=\{C\in(\mathcal{A}\cap D)\otimes\mathcal{B},$ for some $D\in\mathcal{A}\}$. Second, in many situations $\bigcup_{D\in\mathcal{A}}(\mathcal{A}\cap D)\otimes\mathcal{B}=\bigcup_{n\in\mathbb{N}}(\mathcal{A}\cap T_{n})\otimes\mathcal{B}$. For example, this is the case of measures that takes finite values on bounded set of $\mathbb{R}$. In that case $T_{n}$'s are the concentric balls around zero and radius $n$, then for every $A\in\mathcal{A}$ there exists an $n\in\mathbb{N}$ large enough such that $A\subset T_{n}$ and so $(A,\mathcal{A}\cap A)$ is contained in $(T_{n},\mathcal{A}\cap T_{n})$. In general, this is the case when $\mathcal{A}$ is a localising ring of a measurable space, as it is the case in \cite{Kallenberg2} (see page 15 and 19 in \cite{Kallenberg2}). Third, notice that while
	\begin{equation*}
	\int_{T}q(x,B)-q_{1}(x,B)\nu(dx)
	\end{equation*}
	is well defined and finite, the objects $\int_{T}q(x,B)\nu(dx)$ and $\int_{T}q_{1}(x,B)\nu(dx)$ are not well-defined.}
\end{rem}
\begin{rem}
		\textnormal{Since $(T,\sigma(\mathcal{A}))$ and $(X,\mathcal{B})$ are two measurable spaces, we remark that $q^{+}$ and $q^{-}$ are sub-Markovian kernels from $T$ to $X$ (see page 16 in \cite{Kallenberg2}).}
\end{rem}
\begin{rem}
	Theorem \ref{pr-monster} extends the construction method of regular conditional probabilities, see Proposition 2.4 in \cite{RajRos} for a general result, in two directions. First, $\mathcal{A}$ in the space $(T,\mathcal{A})$ is now a $\delta$-ring with the stated additional condition, which is more general than considering $(T,\mathcal{A})$ to be just a measurable space as done in Proposition 2.4 in \cite{RajRos}. Second, $Q_0$ is now a signed bimeasure (precisely a countably additive real valued bi-set function) which is more general than a bimeasure as considered in Proposition 2.4 in \cite{RajRos}.
\end{rem}
\begin{rem}
	In \cite{Karni}, the authors obtained the result on the extension of signed bimeasures obtained in \cite{Horo} for $\sigma$-algebras of general measurable spaces, namely without the condition that one of them needs to be a Lusin measurable space. It might be possible to obtain such result also in our setting. However, this result, as the one in \cite{Karni}, will necessary lack the results on Markov kernels which is indeed of fundamental importance for our purpose and in general in probability theory and stochastic analysis, e.g.~random measures (\cite{Kallenberg2}), stochastic processes and stochastic integration (\cite{RajRos,Pass}), regular conditional probabilities, etc... 
\end{rem}
\section{Stochastic integration based on random measures}\label{Sec-Integration}
In this section we provide a complete general theory for stochastic integrals based on QID random measures. Therefore, we extend the main results of \cite{Pass} on QID random measures and all the results in Chapter II and most of the results in Chapter III in \cite{RajRos}.

First, let us introduce the topological setting of this section. We denote by $S$ an arbitrary non-empty set and by $\mathcal{S}$ a $\delta$-ring with the additional condition that there exists an increasing sequence of sets $S_{1},S_{2},\dots \in {\mathcal {S}}$ s.t.~$\bigcup _{n\in \mathbb {N} }S_{n}=S$. In this framework $S$ does not need to belong to $\mathcal{S}$ (thus $\mathcal{S}$ is not necessarily an algebra) and arbitrary subsets of $S$ do not need to satisfy the condition $\bigcup_{n\in \mathbb {N} }A_{n}\in\mathcal{S}$ (thus $\mathcal{S}$ is not necessarily a $\sigma$-ring).

The following result extends Lemma 3.1 in \cite{Pass}. For this result, we use the following notation. Let $\nu_{0}:\mathcal{S}\mapsto\mathbb{R}$ be a signed measure, $\nu_{1}:\mathcal{S}\mapsto[0,\infty)$ be a measure, $F_{A}(\cdot)$ be a quasi-L\'{e}vy type measure for every $A\in\mathcal{S}$, and $F_{\cdot}(B)$ be a finite signed measure for every $B\in\mathcal{B}_{0}(\mathbb{R})$. Observe that such objects are typical for ID and QID random measures, see Section II in \cite{RajRos} and Section 3 and 4 in \cite{Pass}. Further, we define for every $A\in\mathcal{S}$ and $B\in\mathcal{B}(\mathbb{R})$
\begin{equation*}
J(A,B):=\int_{B}(1\wedge x^{2})F_{A}(dx).
\end{equation*}

Consider the following assumption:
\begin{equation}\label{final-assumption}
\sup_{I_{A}}\sum_{i\in I_{A}}|J(A_{i},B_{i})|<\infty,\quad\forall A\in\mathcal{S},
\end{equation}
where the supremum is taken over all the finite families of disjoints elements of $(\mathcal{S}\cap A)\times\mathcal{B}(\mathbb{R})$. In other words, the supremum is taken over all the finite families of the form $(A_{i},B_{i})_{i\in I_{A}}$, where $I_{A}$ is finite, $A_{i}\in\mathcal{S}\cap A$, $B_{i}\in\mathcal{B}(\mathbb{R})$, and $(A_{i}\times B_{i})\cap (A_{j}\times B_{j})=\emptyset$ for every $i,j\in I_{A}$ with $i\neq j$. Since $\int_{\mathbb{R}}(1\wedge x^{2})|F_{A}|(dx)<\infty$, then $J(\cdot,B)$ is a finite signed measure on $\mathcal{S}$ and $J(A,\cdot)$ is a finite signed measure on $\mathcal{B}(\mathbb{R})$. Further, as in \cite{Pass} and \cite{RajRos}, let
\begin{equation*}
\tau(x):=\begin{cases}
x\,\,&\textnormal{if }\,\,\,|x|\leq 1,\\ \frac{x}{|x|} \,\,&\textnormal{if }\,\,\,|x|> 1.
\end{cases}
\end{equation*}
\begin{lem}\label{l1}
Let $\nu_{0}$, $\nu_{1}$, $F$ be as above and let $F$ satisfy assumption (\ref{final-assumption}). If the triplet $(\nu_{0}(A),\nu_{1}(A),F_{A})$ is the characteristic triplet of a QID random variable, $\forall A\in\mathcal{S}$. Then there exists a unique (in the sense of finite-dimensional distributions) QID random measure $\Lambda$ such that, for every $ A\in\mathcal{S}$,
	\begin{equation}\label{cf}
	\hat{\mathcal{L}}(\Lambda(A))(\theta)=\exp\left(i\theta \nu_{0}(A)-\frac{\theta^{2}}{2}\nu_{1}(A)+\int_{\mathbb{R}}e^{i\theta x}-1-i\theta\tau(x)F_{A}(dx)\right).
	\end{equation}
	Moreover, $\int_{B} (1\wedge x^{2} ) F_{A}(dx)$ is a finite signed bimeasure on $\mathcal{S}\times\mathcal{B}(\mathbb{R})$.
\end{lem}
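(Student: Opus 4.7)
The plan is to first establish the bimeasure claim at the end of the lemma (which secures the applicability of Theorem~\ref{pr-monster}), then build the finite-dimensional distributions of $\Lambda$ by Kolmogorov extension, and finally check countable additivity via Pr\'{e}kopa's Theorem~\ref{TheoremPrekopa}.

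I would begin with the bimeasure property of $J$. For fixed $A \in \mathcal{S}$, $B \mapsto J(A,B)$ is a finite signed measure on $\mathcal{B}(\mathbb{R})$ because $x \mapsto 1\wedge x^{2}$ vanishes at $0$ and $\int_{\mathbb{R}}(1\wedge x^{2})|F_{A}|(dx) < \infty$. For fixed $B \in \mathcal{B}(\mathbb{R})$, I first observe that on sets $B' \in \mathcal{B}_{0}(\mathbb{R})$ the set function $A \mapsto \int_{B'}(1\wedge x^{2})\, F_{A}(dx)$ is a finite signed measure on $\mathcal{S}$: approximate $1\wedge x^{2}$ uniformly by simple functions on $B'$ (which is bounded away from $0$) and use linearity together with the hypothesis that $F_{\cdot}(B')$ is itself a finite signed measure. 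For arbitrary $B \in \mathcal{B}(\mathbb{R})$, write $B\setminus\{0\} = \bigcup_{n}(B\cap\{|x|\geq 1/n\})$, apply the previous step to each truncation, and pass to the limit using dominated convergence against $(1\wedge x^{2})|F_{A}|$. With $J$ a signed bimeasure, assumption (\ref{final-assumption}) becomes exactly condition (c) in Theorem~\ref{pr-monster} with $(T,\mathcal{A}) = (S,\mathcal{S})$ and $(X,\mathcal{B}) = (\mathbb{R},\mathcal{B}(\mathbb{R}))$, which delivers a unique finite signed extension of $J$ together with its kernel representation.

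Next I construct $\Lambda$. For a finite collection $A_{1},\dots,A_{n} \in \mathcal{S}$, I use the $\delta$-ring structure to write their union as a disjoint union of cells $C_{1},\dots,C_{m} \in \mathcal{S}$, and prescribe $(\Lambda(C_{1}),\dots,\Lambda(C_{m}))$ to be a vector of independent QID random variables with triplets $(\nu_{0}(C_{k}),\nu_{1}(C_{k}),F_{C_{k}})$. The law of $(\Lambda(A_{1}),\dots,\Lambda(A_{n}))$ is then recovered by summing over the cells contained in each $A_{i}$: because QID characteristic functions combine triplets additively under convolution, consistency across different refinements reduces to the finite additivity of each triplet component in $A$, which is available for $\nu_{0}$, $\nu_{1}$ and, via Step~1, also for $F$. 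Kolmogorov's extension theorem then produces a process $\Lambda$ on $\mathcal{S}$ with the prescribed joint distributions; uniqueness in finite-dimensional distributions is immediate since (\ref{cf}) and independence on disjoint sets pin down every joint characteristic function.

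Finally, I verify the axioms of Definition~\ref{def-rm-as-in}. Independence of $\Lambda$ on disjoint sets is built into the construction. For countable additivity, Theorem~\ref{TheoremPrekopa} reduces the problem to showing $\Lambda(B_{n}) \to 0$ in probability whenever $B_{n} \in \mathcal{S}$ and $B_{n} \searrow \emptyset$. Using (\ref{cf}) and the bound $|e^{i\theta x}-1-i\theta\tau(x)| \leq C_{\theta}(1\wedge x^{2})$, pointwise convergence of characteristic functions to $1$ reduces to $\nu_{0}(B_{n}), \nu_{1}(B_{n}) \to 0$ and $|J|(B_{n},\mathbb{R}) \to 0$, each of which follows from the (signed) measure structure together with the extension supplied by Theorem~\ref{pr-monster}. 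The main obstacle I expect is Step~1: lifting the signed measure property of $F_{\cdot}$ from $\mathcal{B}_{0}(\mathbb{R})$ to all of $\mathcal{B}(\mathbb{R})$ after integrating against $1\wedge x^{2}$, since $F_{A}$ is not itself a signed measure on $\mathcal{B}(\mathbb{R})$ and the passage requires the careful limiting argument described above; once that is in place, Theorem~\ref{pr-monster} and Kolmogorov's extension theorem do the rest with little additional friction.
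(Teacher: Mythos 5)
Your overall architecture is close to the paper's: both proofs build $\Lambda$ by Kolmogorov extension from the finite additivity of the triplet, reduce countable additivity via Pr\'{e}kopa to $\Lambda(B_{n})\stackrel{p}{\to}0$ for $B_{n}\searrow\emptyset$, and locate all the real work in showing $\int_{\mathbb{R}}(1\wedge x^{2})|F_{B_{n}}|(dx)\to0$. The structural difference is the order: you front-load the full bimeasure property of $J$ on $\mathcal{S}\times\mathcal{B}(\mathbb{R})$ so that Theorem \ref{pr-monster} applies once and for all, whereas the paper only ever uses the bimeasure property on $\mathcal{S}\times\mathcal{B}_{\epsilon}(\mathbb{R})$ (where it follows directly from the hypothesis that $F_{\cdot}(B)$ is a finite signed measure for $B\in\mathcal{B}_{0}(\mathbb{R})$), applies the extension theorem there to get $\int_{|x|\geq\epsilon}(1\wedge x^{2})|F_{B_{n}}|(dx)\to0$, controls the piece $|x|<\epsilon$ uniformly in $n$ by (\ref{final-assumption}), and only then obtains the bimeasure statement of the lemma as a corollary. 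Your ordering would in fact be cleaner if it worked, because with the full extension in hand the $\epsilon$-splitting in Step 3 becomes unnecessary.

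The genuine gap is in your Step 1, precisely at the point you yourself flag as the main obstacle, and the fix is not the one you name. To get $\sigma$-additivity in $A$ of $A\mapsto\int_{B}(1\wedge x^{2})F_{A}(dx)$ for a general Borel $B$, write $A=\bigsqcup_{k}A_{k}$ and $B_{m}=B\cap\{|x|\geq1/m\}$; you must interchange $\lim_{m}$ with $\sum_{k}$ in $\lim_{m}\sum_{k}\int_{B_{m}}(1\wedge x^{2})F_{A_{k}}(dx)$. Dominated convergence ``against $(1\wedge x^{2})|F_{A}|$'' only yields $\int_{B_{m}}(1\wedge x^{2})F_{A_{k}}(dx)\to\int_{B}(1\wedge x^{2})F_{A_{k}}(dx)$ for each fixed index; it cannot dominate the series, since the triangle inequality gives $|F_{A}|\leq\sum_{k}|F_{A_{k}}|$ on sets bounded away from zero and not the reverse. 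The domination you actually need is
\begin{equation*}
\sum_{k}\sup_{m}\Bigl|\int_{B_{m}}(1\wedge x^{2})F_{A_{k}}(dx)\Bigr|\leq\sum_{k}\int_{\mathbb{R}}(1\wedge x^{2})|F_{A_{k}}|(dx)\leq\sum_{k}\sup_{I_{A_{k}}}\sum_{i\in I_{A_{k}}}|J(A_{i},B_{i})|\leq\sup_{I_{A}}\sum_{i\in I_{A}}|J(A_{i},B_{i})|<\infty,
\end{equation*}
where the middle inequality uses the Hahn decomposition of $F_{A_{k}}$ and the last uses the superadditivity of the supremum over disjoint $A_{k}$ together with assumption (\ref{final-assumption}). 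As written, your Step 1 never invokes (\ref{final-assumption}), which is a warning sign: the countable additivity of $J(\cdot,B)$ for $B$ containing a neighbourhood of $0$ is exactly where that hypothesis must enter, and without it the interchange is unjustified. Since your Step 3 appeals to ``the extension supplied by Theorem \ref{pr-monster}'' to conclude $|J|(B_{n},\mathbb{R})\to0$, the gap propagates there as well. Once the correct domination is inserted, your route closes.
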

\begin{proof}
The existence of a finitely additive random measure $\Lambda=\{\Lambda(A):A\in\mathcal{S}\}$ follows by a standard application of Kolmogorov extension theorem using the finite additivity of $\nu_{0}(\cdot)$, $\nu_{1}(\cdot)$, and of $\int_{\mathbb{R}}e^{i\theta x}-1-i\theta\tau(x)F_{\cdot}(dx)$, $\forall\theta\in\mathbb{R}$. Indeed, for the latter we have that for $A_{1},A_{2}\in\mathcal{S}$ with $A_{1}\cap A_{2}=\emptyset$
\begin{equation*}
\int_{\mathbb{R}}e^{i\theta x}-1-i\theta\tau(x)F_{A_{1}\cup A_{2}}(dx)
=\int_{|x|\geq\epsilon}e^{i\theta x}-1-i\theta\tau(x)F_{A_{1}\cup A_{2}}(dx)+\int_{|x|<\epsilon}e^{i\theta x}-1-i\theta\tau(x)F_{A_{1}\cup A_{2}}(dx)
\end{equation*}
\begin{equation*}
=\int_{|x|\geq\epsilon}e^{i\theta x}-1-i\theta\tau(x)F_{A_{1}}(dx)+\int_{|x|\geq\epsilon}e^{i\theta x}-1-i\theta\tau(x)F_{A_{2}}(dx)
+\int_{|x|<\epsilon}e^{i\theta x}-1-i\theta\tau(x)F_{A_{1}\cup A_{2}}(dx)
\end{equation*}
\begin{equation*}
\stackrel{\epsilon\to0}{\to}\int_{\mathbb{R}}e^{i\theta x}-1-i\theta\tau(x)F_{A_{1}}(dx)+\int_{\mathbb{R}}e^{i\theta x}-1-i\theta\tau(x)F_{ A_{2}}(dx).
\end{equation*}
To prove that $\Lambda$ is countable additive let $A_{n}\downarrow\emptyset$ with $\{A_{n}\}\subset\mathcal{S}$. Then, by definition of (signed) measures, $\nu_{0}(A_{n})\rightarrow0$ and $\nu_{1}(A_{n})\rightarrow0$. In the following we prove that $\int_{\mathbb{R}} (1\wedge x^{2} ) |F_{A_{n}}|(dx)\to0$ as $n\to\infty$. By the L\'{e}vy continuity theorem, this will give us $\Lambda(A_{n})\stackrel{p}{\to}0$, namely the countable additivity of $\Lambda$.

Let $\epsilon>0$ and let $J_{\epsilon}(A,B):=\int_{\mathbb{R}} (1\wedge x^{2} ) F_{A_{n}}(dx)$ for every $A\in\mathcal{S}$ and $B\in\mathcal{B}_{\epsilon}(\mathbb{R})$ (recall that $\mathcal{B}_{\epsilon}(\mathbb{R})=\{B\in\mathcal{B}(\mathbb{R}):B\cap(-\epsilon,\epsilon)=\emptyset\}$). By the properties of $F$ we have that $J_{\epsilon}$ is a finite signed bimeasure on $(A_{1},\mathcal{S}\cap A_{1})\times \mathcal{B}_{\epsilon}(\mathbb{R})$. Notice that we have taken $A_{1}$ because $A_{n}\in (A_{1},\mathcal{S}\cap A_{1})$ for every $n\in\mathbb{N}$. Thus, by Theorem 5.18 in \cite{Pass} we have that there exists a unique finite signed measure on $(A_{1},\mathcal{S}\cap A_{1})\otimes \mathcal{B}_{\epsilon}(\mathbb{R})$, call it $Q_{\epsilon}$. Observe that its total variation, which we denote by $|Q_{\epsilon}|$, is a well-defined finite measure on $(A_{1},\mathcal{S}\cap A_{1})\otimes \mathcal{B}_{\epsilon}(\mathbb{R})$. Now, let $E^{+}_{A}$ and $E^{-}_{A}$ be the Hahn decomposition of $\mathbb{R}$ under the signed measure $F_{A}(\cdot)$, for every $A\in\mathcal{S}$. Observe that, for every $n\in\mathbb{N}$, we have
\begin{equation*}
|Q_{\epsilon}|(A_{n}\times(-\infty,-\epsilon]\cup[\epsilon,\infty))\geq\sup\limits_{I_{A_{n}}}\sum_{i\in I_{A_{n}}}|J_{\epsilon}(A_{i},B_{i})|
\end{equation*}
\begin{equation*}
\geq \int_{E_{A_{n}}^{+}\cap |x|\geq\epsilon}(1\wedge x^{2})F_{A_{n}}(dx)-\int_{E_{A_{n}}^{-}\cap |x|\geq\epsilon}(1\wedge x^{2})F_{A_{n}}(dx)= \int_{ |x|\geq\epsilon}(1\wedge x^{2})|F_{A_{n}}|(dx).
\end{equation*}
Since $A_{n}\to\emptyset$, then $|Q_{\epsilon}|(A_{n}\times(-\infty,-\epsilon]\cup[\epsilon,\infty))\to0$ as $n\to\infty$. Thus, $\int_{ |x|\geq\epsilon}(1\wedge x^{2})|F_{A_{n}}|(dx)\to0$ as $n\to\infty$, for every $\epsilon>0$.

By (\ref{final-assumption}) we have that 
\begin{equation*}
\infty>\sup\limits_{I_{A_{1}}}\sum_{i\in I_{A_{1}}}|J(A_{i},B_{i})|\geq\sup\limits_{n\in\mathbb{N}}\int_{ |x|<\epsilon}(1\wedge x^{2})|F_{A_{n}}|(dx).
\end{equation*}
Since $\int_{ |x|<\epsilon}(1\wedge x^{2})|F_{A_{n}}|(dx)$ is a decreasing function of $\epsilon$, for every $n\in\mathbb{N}$, we have that $\sup\limits_{n\in\mathbb{N}}\int_{ |x|<\epsilon}(1\wedge x^{2})|F_{A_{n}}|(dx)$ is a bounded and decreasing function of $\epsilon$. Hence, $\sup\limits_{n\in\mathbb{N}}\int_{ |x|<\epsilon}(1\wedge x^{2})|F_{A_{n}}|(dx)\to0$ as $\epsilon\to0$. Therefore,
\begin{equation*}
\lim\limits_{n\to\infty} \int_{\mathbb{R}} (1\wedge x^{2} ) |F_{A_{n}}|(dx)\leq\lim\limits_{n\to\infty} \int_{|x|\geq\epsilon} (1\wedge x^{2} ) |F_{A_{n}}|(dx)+\int_{|x|<\epsilon} (1\wedge x^{2} ) |F_{A_{n}}|(dx)
\end{equation*}
\begin{equation*}
\leq \sup\limits_{n\in\mathbb{N}}\int_{ |x|<\epsilon}(1\wedge x^{2})|F_{A_{n}}|(dx)
\end{equation*}
which goes to zero as $\epsilon\to0$. 

Finally, $B\mapsto\int_{B} (1\wedge x^{2} ) F_{A}(dx)$ is a finite signed measure on $\mathcal{B}(\mathbb{R})$, for every $A\in\mathcal{S}$. Further, since $A\mapsto\int_{B} (1\wedge x^{2} ) F_{A}(dx)$ is finitely additive for every $B\in\mathcal{B}(\mathbb{R})$, and since $\int_{B} (1\wedge x^{2} ) |F_{A_{n}}|(dx)\to 0$ as $n\to\infty$ for every $A_{n}\downarrow\emptyset$, then $\int_{B} (1\wedge x^{2} ) F_{A}(dx)$ is a finite signed measure on $\mathcal{S}$, for every $B\in\mathcal{B}(\mathbb{R})$.
\end{proof}
Notice that in Lemma \ref{l1} we have to mention the sentence ``If $(\nu_{0}(A),\nu_{1}(A),F_{A})$ is the characteristic triplet of a QID random variable, $\forall A\in\mathcal{S}$", because, while for every QID distribution there exists a (unique) characteristic triplet, not every characteristic triplet gives rise to a (QID) distribution. Moreover, the reason why Lemma \ref{l1} is an extension of Lemma 3.1 in \cite{Pass} is because the assumption on $F$ in the former is weaker than the one in the latter (see Lemma \ref{l3}).

In the following result, we provide a trivial generalisation of Lemma 3.4 in \cite{Pass}.
\begin{lem}
Let $\Lambda$ be random measure. Denote by $(\nu_{0}(A),\nu_{1}(A),F_{A})$ the characteristic triplet of $\Lambda(A)$, for every $A\in\mathcal{S}$. Assume that (\ref{final-assumption}) hold and that the following hold: for every $\{A_{n}\}\subset\mathcal{S}$ with $A_{n}\downarrow\emptyset$,
\begin{equation}\label{final-assumption-2}
\int_{\mathbb{R}}e^{i\theta x}-1-i\theta\tau(x)F_{A_{n}}(dx)\to0, \,\,\,\,\forall\theta\in\mathbb{R}\quad\Rightarrow\quad F_{A_{n}}(B)\to0,\,\,\,\,\forall B\in\mathcal{B}_{0}(\mathbb{R}).
\end{equation}
Then, $\nu_{0}$, $\nu_{1}$, $F$ are as in Lemma \ref{l1}, namely $\nu_{0}:\mathcal{S}\mapsto\mathbb{R}$ is a signed measure, $\nu_{1}:\mathcal{S}\mapsto[0,\infty)$ is a measure, $F_{A}(\cdot)$ is a quasi-L\'{e}vy measure for every $A\in\mathcal{S}$, and $F_{\cdot}(B)$ is a signed measure for every $B\in\mathcal{B}_{0}(\mathbb{R})$ and such that $F$ satisfies (\ref{final-assumption}).
\end{lem}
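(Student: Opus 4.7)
The plan is to prove the lemma in two stages: first establish finite additivity of $\nu_0$, $\nu_1$, and $F_{\cdot}(B)$ on $\mathcal{S}$, then promote this to $\sigma$-additivity. For finite additivity, I would use that for disjoint $A_1, A_2 \in \mathcal{S}$ with $A_1 \cup A_2 \in \mathcal{S}$, the random measure property gives $\Lambda(A_1 \cup A_2) = \Lambda(A_1) + \Lambda(A_2)$ a.s.\ with the summands independent, so the characteristic functions factor; uniqueness of the QID characteristic triplet then yields additivity of each triplet component. That each $F_A$ is a quasi-L\'evy measure is immediate from the hypothesis that $(\nu_0(A),\nu_1(A),F_A)$ is the triplet of the QID variable $\Lambda(A)$, and (\ref{final-assumption}) is assumed outright.

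For $\sigma$-additivity, let $A_n \downarrow \emptyset$ in $\mathcal{S}$. Theorem \ref{TheoremPrekopa} applied to $\Lambda$ gives $\Lambda(A_n) \to 0$ in probability, hence the cumulant
\[
\Psi_n(\theta) := i\theta \nu_0(A_n) - \frac{\theta^2}{2}\nu_1(A_n) + \int_{\mathbb{R}} (e^{i\theta x} - 1 - i\theta \tau(x)) F_{A_n}(dx)
\]
tends to $0$ pointwise in $\theta$ (with the principal branch of the logarithm, well defined since QID characteristic functions never vanish). Assumption (\ref{final-assumption}) yields the uniform bound $\int (1 \wedge x^2) |F_{A_n}|(dx) \leq M$ for $A_n \subset A_1$, and since $|e^{i\theta x} - 1 - i\theta \tau(x)| \leq C_\theta (1 \wedge x^2)$, the integral term $I_n(\theta)$ is bounded by $C_\theta M$. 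Combined with $\Psi_n(\theta) \to 0$, this forces $\nu_0(A_n)$ and $\nu_1(A_n)$ to be bounded in $n$.

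To identify every subsequential limit of $(\nu_0(A_n),\nu_1(A_n))$ as $(0,0)$, the plan is to pass to the finite positive measures $\mu_n^{\pm}(dx) := (1\wedge x^2)\,F_{A_n}^{\pm}(dx)$ on $\mathbb{R}$, which are uniformly bounded in total mass (by $M$) and uniformly tight since $\mu_n^{\pm}(\{|x|>R\}) \leq M/R^2$ for $R\geq 1$. Prokhorov's theorem together with Bolzano-Weierstrass gives a subsequence along which $\nu_0(A_{n_k})\to c_0$, $\nu_1(A_{n_k})\to c_1$, $\mu_{n_k}^{\pm}\to\mu^{*\pm}$ weakly. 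Defining $F^*$ on $\mathcal{B}_0(\mathbb{R})$ by $dF^* = (1\wedge x^2)^{-1}\,d(\mu^{*+}-\mu^{*-})$ produces a quasi-L\'evy type signed measure with $\int(1\wedge x^2)|F^*|(dx)\leq 2M$. Since $(e^{i\theta x}-1-i\theta\tau(x))/(1\wedge x^2)$ extends continuously to $\mathbb{R}$ (with value $-\theta^2/2$ at $0$ by Taylor expansion), weak convergence of $\mu_{n_k}^{\pm}$ lets one pass to the limit in $I_{n_k}(\theta)$. Combining with $\Psi_{n_k}(\theta)\to 0$, the triplet $(c_0,c_1,F^*)$ produces a characteristic function identically equal to $1$, the characteristic function of $\delta_0$; so it is a QID triplet for $\delta_0$, and uniqueness of the QID characteristic triplet forces $c_0 = c_1 = 0$ and $F^* = 0$. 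The standard subsequence-sub-subsequence principle then yields $\nu_0(A_n),\nu_1(A_n)\to 0$ along the full sequence, whence $I_n(\theta) = \Psi_n(\theta) - i\theta\nu_0(A_n) + \theta^2\nu_1(A_n)/2 \to 0$ for every $\theta$, and (\ref{final-assumption-2}) finally delivers $F_{A_n}(B)\to 0$ for every $B\in\mathcal{B}_0(\mathbb{R})$.

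The main obstacle is this last identification step: recovering a signed quasi-L\'evy type limit $F^*$ on the non-$\sigma$-finite space $\mathcal{B}_0(\mathbb{R})$ and passing to the limit inside the L\'evy-Khintchine integrals. This requires carefully exploiting that the weighted integrand $(e^{i\theta x}-1-i\theta\tau(x))/(1\wedge x^2)$ is continuous and bounded on all of $\mathbb{R}$, including at $0$, and that $F^*$ inherits the integrability needed to make $(c_0,c_1,F^*)$ a valid QID triplet. Once this is established, uniqueness of the QID characteristic triplet is what closes the argument.
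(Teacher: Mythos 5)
Your overall skeleton is the same as the paper's: finite additivity of $\nu_{0}$, $\nu_{1}$ and $F_{\cdot}(B)$ from the factorisation $\hat{\mathcal{L}}(\Lambda(\bigcup_{k}A_{k}))=\prod_{k}\hat{\mathcal{L}}(\Lambda(A_{k}))$ and uniqueness of the QID triplet, then countable additivity via Pr\'{e}kopa's criterion, the convergence $\Lambda(A_{n})\stackrel{p}{\to}0$, and finally assumption (\ref{final-assumption-2}). The difference is that the paper simply asserts that $\Lambda(A_{n})\stackrel{p}{\to}0$ forces $\nu_{0}(A_{n})\to0$, $\nu_{1}(A_{n})\to0$ and $\int_{\mathbb{R}}(e^{i\theta x}-1-i\theta\tau(x))F_{A_{n}}(dx)\to0$ (deferring to Lemma 3.4 of the cited earlier work), whereas you attempt to prove this assertion. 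That is a worthwhile thing to try, but your argument for it has a genuine gap.

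The flaw is the tightness estimate. With $\mu_{n}^{\pm}(dx)=(1\wedge x^{2})F_{A_{n}}^{\pm}(dx)$ you claim $\mu_{n}^{\pm}(\{|x|>R\})\leq M/R^{2}$ for $R\geq1$; in fact $\mu_{n}^{\pm}(\{|x|>R\})=F_{A_{n}}^{\pm}(\{|x|>R\})$ because the weight $1\wedge x^{2}$ is identically $1$ there, and (\ref{final-assumption}) only gives the bound $M$ with no decay in $R$ (a decay like $M/R^{2}$ would require a second moment of $F_{A_{n}}^{\pm}$ on $\{|x|>R\}$, which is not assumed). So the family $\{\mu_{n}^{\pm}\}$ is bounded in total mass but not tight, Prokhorov's theorem does not apply, and you cannot substitute vague convergence on $[-\infty,+\infty]$ because $(e^{i\theta x}-1-i\theta\tau(x))/(1\wedge x^{2})=e^{i\theta x}-1-i\theta\operatorname{sgn}(x)$ for $|x|>1$ has no continuous extension to $\pm\infty$. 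Consequently the passage to the limit in $I_{n_{k}}(\theta)$ and the identification of the limit triplet with that of $\delta_{0}$ break down as written. (A secondary, fixable point: even granting weak limits, mass of $\mu^{*\pm}$ at $0$ is absorbed into the Gaussian component, so the limiting triplet is not $(c_{0},c_{1},F^{*})$ but has Gaussian part $c_{1}+\mu^{*+}(\{0\})-\mu^{*-}(\{0\})$; since $F^{*}$ is signed, uniqueness of the triplet then does not immediately give $c_{1}=0$.) This obstruction is precisely the pathology of the signed setting that the paper sidesteps by postulating (\ref{final-assumption-2}) rather than proving a convergence-of-triplets theorem; if you want to keep your detailed route you would need either an additional hypothesis delivering tightness of $\{(1\wedge x^{2})|F_{A_{n}}|\}$, or to follow the paper and take the convergence of the three cumulant components as the content of the cited Lemma 3.4.
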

\begin{proof}
First, since $\Lambda$ is a QID random measure, it follows that $F_{A}$ is a quasi-L\'{e}vy measure on $\mathbb{R}$, for every $A\in\mathcal{S}$. second, let $\{A_{k}\}_{k=1}^{n}$, $n\in\mathbb{N}$, be pairwise disjoint sets in $\mathcal{S}$. By the uniqueness of the L\'{e}vy-Khintchine representation of a quasi-ID distribution, it follows, using $\hat{\mathcal{L}}(\Lambda(\bigcup_{k=1}^{n}A_{k}))=\prod_{k=1}^{n}\hat{\mathcal{L}}(\Lambda(A_{k}))$, that all three set functions $\nu_{0}$, $\nu_{1}$ and $F_{\cdot}(B)$ (for every fixed $B\in\mathcal{B}_{0}(\mathbb{R})$) are finitely additive. Let now $\{A_{n}\}\subset\mathcal{S}$, $A_{n}\searrow\emptyset$.
Since $\Lambda(A_{n})\stackrel{p}{\rightarrow}0$ we have that $\nu_{0}(A_{n})\rightarrow0$, $\nu_{1}(A_{n})\rightarrow0$ and $\int_{\mathbb{R}}e^{i\theta x}-1-i\theta\tau(x)F_{A_{n}}(dx)\to0$, $\forall\theta\in\mathbb{R}$. Thus, by (\ref{final-assumption-2}) we obtain the stated result.
\end{proof}
\begin{rem}
	\textnormal{If we restrict to QID random measure satisfying both (\ref{final-assumption}) and (\ref{final-assumption-2}), then from the above lemmas we conclude that there is a one ot one correspondence between a QID random measure satisfying (\ref{final-assumption}) and (\ref{final-assumption-2}), and a triplet composed by a finite signed measure, a finite measure, and a bi-set function, which is a quasi-L\'{e}vy measure for fixed $A\in\mathcal{S}$, is a finite signed measure for fixed $B\in\mathcal{B}_{0}(\mathbb{R})$, and satisfies (\ref{final-assumption}). We would like to have a one to one correspondence without condition (\ref{final-assumption-2}), but this appears to be impossible.}
\end{rem}

We are now ready to present the results on QID stochastic integrals. For the rest of the section we assume the conditions of Lemma \ref{l1}, namely we let: $\nu_{0}$ be a finite signed measure on $\mathcal{S}$, $\nu_{1}$ be a measure on $\mathcal{S}$, $F_{A}(\cdot)$ be a quasi-L\'{e}vy type measure for every $A\in\mathcal{S}$, $F_{\cdot}(B)$ be a finite signed measure for every $B\in\mathcal{B}_{0}(\mathbb{R})$, and condition (\ref{final-assumption}) be satisfied. 

Define the set function $\nu(A):\mathcal{S}\mapsto [0,\infty)$ as
\begin{equation}\label{nu}
\nu(A):=\sup_{I_{A}}\sum_{i\in I_{A}}|J(A_{i},B_{i})|.
\end{equation}
Notice that $\nu(S_{n})<\infty$ and that $\nu$ is a measure on $(S_{n},\mathcal{S}\cap S_{n})$. Then, by the Carath\'{e}odory's extension theorem $\nu$ extends to a $\sigma$-finite measure on $(S,\sigma(\mathcal{S}))$ (see also Theorem \ref{pr-monster}). To have a better idea of what kind of object $\nu$ is, compare it with the definition of total variation of a signed measure (see (\ref{def-totalvariation})).

Let $E^{+}_{A}$ and $E^{-}_{A}$ be the Hahn decomposition of $\mathbb{R}$ under the signed measure $F_{A}$. Observe that
\begin{equation}\label{xi}
\nu(A)=\sup\limits_{I_{A}}\sum_{i\in I_{A}}|J(A_{i},B_{i})|\geq \int_{E_{A}^{+}}(1\wedge x^{2})F_{A}(dx)-\int_{E_{A}^{-}}(1\wedge x^{2})F_{A}(dx)= \int_{\mathbb{R}}(1\wedge x^{2})|F_{A}|(dx).
\end{equation}
Therefore, since $\nu(A)$ is finite by assumption we have that $\int_{\mathbb{R}}(1\wedge x^{2})|F_{A}|(dx)<\infty$. 

We show now that the assumption of this setting, namely (\ref{final-assumption}), is weaker than the ones presented in Section 5 in \cite{Pass}. Indeed, in the rest of this section we both unify and generalise the results on QID stochastic integrals in \cite{Pass}. In Section 5 in \cite{Pass}, the authors investigate three different sets of assumptions under which the results on QID stochastic integrals are obtained.

In Subsection 5.1 in \cite{Pass}, they assume that the QID random measure is ``generated'' by two ID random measures. In other words, they assume that there exist two ID random measures $\Lambda_{G}$ and $\Lambda_{M}$ s.t.~for every $A\in\mathcal{S}$, $\Lambda(A)+\Lambda_{M}(A)\stackrel{d}{=}\Lambda_{G}(A)$ and $\Lambda_{M}(A)$ independent of $\Lambda(A)$. This case resembles the definition of QID distributions transferred to random measures. In this case, $F$ is given by $F_{A}(B)=G_{A}(B)-M_{A}(B)$ for every $A\in\mathcal{S}$ and $B\in\mathcal{B}_{0}(\mathbb{R})$, where $G$ and $M$ are the L\'{e}vy measure of $\Lambda_{G}$ and $\Lambda_{M}$. In Subsection 5.2, they assume that $\mathcal{S}$ is a $\sigma$-algebra and that $F$ is a finite signed bimeasure. In Subsection 5.3, they assume that $\mathcal{S}$ is a $\sigma$-algebra and that $\int_{B}(1\wedge x^{2})F_{A}(dx)$ is a finite signed bimeasure and that (\ref{final-assumption}) holds. It is possible to see that the assumptions of Subsection 5.3 are strictly weaker than the ones in Subsection 5.2. 

The assumptions of Subsection 5.3 in \cite{Pass} are stricter than our assumptions. Indeed, assuming that $\mathcal{S}$ is a $\sigma$-algebra is more restrictive than assuming that $\mathcal{S}$ is a $\delta$-ring with the additional condition that there exists an increasing sequence of sets $S_{1},S_{2},\dots \in {\mathcal {S}}$ s.t.~$\bigcup _{n\in \mathbb {N} }S_{n}=S$, which is the present setting. Moreover, in Lemma \ref{l1} we are able to show that (\ref{final-assumption}) is enough to ensure that $\int_{B}(1\wedge x^{2})F_{A}(dx)$ is a finite signed bimeasure. 

Concerning the assumptions in Section 5.1 in \cite{Pass}, the following result shows that they are stricter than our assumption.
\begin{lem}\label{l3}
	Let $G$ and $M$ be defined as follow: $G_{A}(\cdot)$ is a L\'{e}vy measure for every $A\in\mathcal{S}$ and $G_{\cdot}(B)$ is a measure for every $B\in\mathcal{B}_{0}(\mathbb{R})$ -- and similarly for $M$. Let $F_{A}(B)=G_{A}(B)-M_{A}(B)$ for every $A\in\mathcal{S}$ and $B\in\mathcal{B}_{0}(\mathbb{R})$. Define $\nu$ as in (\ref{nu}). Then,
	\begin{equation*}
	\int_{\mathbb{R}}(1\wedge x^{2})G_{A}(dx)+\int_{\mathbb{R}}(1\wedge x^{2})M_{A}(dx)<\infty,\,\,\,\, \forall A\in\mathcal{S}\quad\Rightarrow\quad \textnormal{Assumption (\ref{final-assumption})}.
	\end{equation*}	
\end{lem}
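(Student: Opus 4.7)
The plan is to reduce the assertion to a statement about two non-negative bimeasures and then exploit positivity via a combinatorial partition argument inside the $\delta$-ring $\mathcal{S}\cap A$.

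First I would write $F_A(B) = G_A(B) - M_A(B)$, which gives $J(A,B) = J_G(A,B) - J_M(A,B)$, where
\[
J_G(A,B) := \int_B (1\wedge x^2)\,G_A(dx), \qquad J_M(A,B) := \int_B (1\wedge x^2)\,M_A(dx).
\]
Because $(1\wedge x^2)\geq 0$ and $G_A, M_A$ are (positive) measures for every $A\in\mathcal{S}$, both $J_G$ and $J_M$ are non-negative bimeasures on $\mathcal{S}\times\mathcal{B}(\mathbb{R})$. The triangle inequality $|J(A_i,B_i)|\leq J_G(A_i,B_i)+J_M(A_i,B_i)$ then shows that it is enough to prove, for every $A\in\mathcal{S}$ and every finite family of disjoint rectangles $(A_i\times B_i)_{i\in I_A}$ with $A_i\in\mathcal{S}\cap A$ and $B_i\in\mathcal{B}(\mathbb{R})$, the inequalities
\[
\sum_{i\in I_A} J_G(A_i,B_i) \;\leq\; \int_{\mathbb{R}}(1\wedge x^2)\,G_A(dx), \qquad \sum_{i\in I_A} J_M(A_i,B_i) \;\leq\; \int_{\mathbb{R}}(1\wedge x^2)\,M_A(dx),
\]
both of which are finite by hypothesis.

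The core step is a refinement argument. Since $\mathcal{S}\cap A$ is a $\sigma$-algebra (Lemma \ref{215}), the finitely many sets $\{A_i\}_{i\in I_A}$ generate a finite partition $\{P_j\}_{j\in J}$ of $A$ with $P_j \in \mathcal{S}\cap A$, and each $A_i$ is the disjoint union of those $P_j$ contained in it. Similarly, $\{B_i\}_{i\in I_A}$ generate a finite partition $\{Q_k\}_{k\in K}$ of $\mathbb{R}$ in $\mathcal{B}(\mathbb{R})$. Using the finite additivity of $J_G$ separately in each argument,
\[
J_G(A_i,B_i) \;=\; \sum_{\substack{j,k:\\ P_j\subset A_i,\,Q_k\subset B_i}} J_G(P_j,Q_k).
\]
Disjointness of the rectangles $A_i\times B_i$ implies that the index sets $\{(j,k): P_j\subset A_i,\, Q_k\subset B_i\}$ are pairwise disjoint in $J\times K$, so
\[
\sum_{i\in I_A} J_G(A_i,B_i) \;\leq\; \sum_{j,k} J_G(P_j,Q_k) \;=\; \sum_{j} J_G(P_j,\mathbb{R}) \;=\; J_G(A,\mathbb{R}) \;=\; \int_{\mathbb{R}}(1\wedge x^2)\,G_A(dx),
\]
where the middle equalities use positivity and finite additivity of $J_G(P_j,\cdot)$ and of $J_G(\cdot,\mathbb{R})$. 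The identical argument gives the bound for $J_M$, and combining yields assumption (\ref{final-assumption}).

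There is no real obstacle here; the only point that requires a little care is verifying that the partition generated by the $A_i$'s actually stays inside $\mathcal{S}\cap A$ — which is immediate from Lemma \ref{215} — and that finite additivity in one argument at a time is enough to telescope $J_G(A_i,B_i)$ into a double sum over $(j,k)$. The essential content is that for \emph{positive} bimeasures, the total mass over any finite arrangement of disjoint rectangles contained in $A\times\mathbb{R}$ is controlled by the single number $J_G(A,\mathbb{R})$, which by hypothesis is finite.
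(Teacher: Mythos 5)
Your proof is correct and follows essentially the same route as the paper's: decompose $J=J_G-J_M$, use the triangle inequality and positivity, and bound each non-negative piece by refining the disjoint rectangles into a common partition and exploiting finite additivity together with the disjointness constraint. The only cosmetic difference is that you refine simultaneously in both coordinates into a product partition $\{P_j\times Q_k\}$, whereas the paper refines only the $A_i$'s into disjoint sets and then argues that the corresponding $B_i$'s over each atom must be disjoint; both arguments deliver the same bound $\int_{\mathbb{R}}(1\wedge x^2)G_A(dx)+\int_{\mathbb{R}}(1\wedge x^2)M_A(dx)$.
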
 
\begin{proof}
	Notice that $\nu(A)<\infty$, $\forall A\in\mathcal{S}$, is equivalent to Assumption (\ref{final-assumption}) and that
	\begin{equation*}
	\int_{B}(1\wedge x^{2})F_{A}(dx)=\int_{B}(1\wedge x^{2})G_{A}(dx)-\int_{B}(1\wedge x^{2})M_{A}(dx).
	\end{equation*}
	Thus, for every $A\in\mathcal{S}$, we have that
	\begin{equation*}
	\nu(A)=\sup\limits_{I_{A}}\sum_{i\in I_{A}}|J(A_{i},B_{i})|\leq\sup\limits_{I_{A}}\sum_{i\in I_{A}}\int_{B_{i}}(1\wedge x^{2})G_{A_{i}}(dx)+\int_{B_{i}}(1\wedge x^{2})M_{A_{i}}(dx).
	\end{equation*}
	In the following, we prove that for every family $(A_{i},B_{i})_{i\in I_{A}}$ we have
	\begin{equation*}
	\sum_{i\in I_{A}}\int_{B_{i}}(1\wedge x^{2})G_{A_{i}}(dx)\leq \int_{\mathbb{R}}(1\wedge x^{2})G_{A}(dx).
	\end{equation*}
	If the $A_{i}$'s are all disjoints, then the $B_{i}$'s could take any values. In particular, by the (finite) additivity of $G$ we obtain that
	\begin{equation*}
	\sum_{i\in I_{A}}\int_{B_{i}}(1\wedge x^{2})G_{A_{i}}(dx)\leq \sum_{i\in I_{A}}\int_{\mathbb{R}}(1\wedge x^{2})G_{A_{i}}(dx)= 
		\int_{\mathbb{R}}(1\wedge x^{2})G_{\cup_{i\in I} A_{i}}(dx)= \int_{\mathbb{R}}(1\wedge x^{2})G_{A}(dx).
	\end{equation*}
	Thus, it remains to investigate the case where the $A_{i}$'s have at least one intersection. Let $(A_{i})_{i\in I}$ be any finite family of sets in $\mathcal{S}\cap A$. It is possible to find a finite set of disjoints elements in $\mathcal{S}\cap A$, denote it $(A'_{i})_{i\in J_{A}}$, such that $\bigcup_{i\in I_{A}}A_{i}=\bigcup_{i\in J_{A}}A'_{i}$. Hence, each $A'_{i}$ is a subset of one or more of the $A_{i}$'s. Therefore, the corresponding $B_{i}$ of the $A_{i}$'s, whose intersection is $A'_{i}$, cannot have intersections, because the rectangles $(A_{i},B_{i})$'s must be disjoint. This implies that for each $A'_{i}$ the union of the corresponding $B_{i}$'s is a subset of $\mathbb{R}$. Hence, we have
	\begin{equation*}
	\sum_{i\in I_{A}}\int_{B_{i}}(1\wedge x^{2})G_{A_{i}}(dx)\leq \sum_{i\in J_{A}}\int_{\mathbb{R}}(1\wedge x^{2})G_{A'_{i}}(dx)= \int_{\mathbb{R}}(1\wedge x^{2})G_{\cup_{i\in J_{A}} A'_{i}}(dx)
	\end{equation*}
	\begin{equation*}
	= \int_{\mathbb{R}}(1\wedge x^{2})G_{\cup_{i\in I_{A}} A_{i}}(dx)= \int_{\mathbb{R}}(1\wedge x^{2})G_{A}(dx).
	\end{equation*}
	Since the same arguments hold for $M$, we obtain the stated result.
\end{proof}
\begin{rem}
	\textnormal{This remark contains one of the most important take-home message of this paper. From the proof of Lemma \ref{l3} and from (\ref{xi}), it is possible to see that for $G$ we have that for every $A\in\mathcal{S}$
	\begin{equation*}
	\sup\limits_{I_{A}}\sum_{i\in I_{A}}\int_{B_{i}}(1\wedge x^{2})G_{A_{i}}(dx)=\int_{\mathbb{R}}(1\wedge x^{2})G_{A}(dx).
	\end{equation*}
	In the present setting for $G$, as well as for $M$ and for the L\'{e}vy measure of any ID random measure in Rajput and Rosinski \cite{RajRos}, we know that $\int_{\mathbb{R}}(1\wedge x^{2})G_{A}(dx)<\infty$, because $G$ is L\'{e}vy measure. This implies that assumption (\ref{final-assumption}) in these cases is \textit{always} satisfied. Thus, assumption (\ref{final-assumption}) is always satisfied in the framework of Rajput and Rosinski \cite{RajRos}. Therefore, this assumption does not have to be seen as a artificial restrictive assumption, because in the non-signed case (as in \cite{RajRos}) is not a restrictive at all, and in the signed case it is there to satisfy assumption (c) in Theorem \ref{pr-monster}, which comes from the assumption (5) of Theorem 4 in \cite{Horo} and represents a necessary and sufficient condition for the extension of signed bimeasures. The extension of signed bimeasures is essential for the development of the whole theory. This is why we also believe that assumption (\ref{final-assumption}) cannot be weakened. }
\end{rem}
We are now ready to present the results on QID random measures.
\begin{pro}\label{pro-misure-generale}
	Let $\nu_{0}:\mathcal{S}\mapsto\mathbb{R}$ be a signed measure, $\nu_{1}:\mathcal{S}\mapsto\mathbb{R}$ be a measure, $F_{A}$ be a quasi-L\'{e}vy type measure on $\mathbb{R}$ for every $A\in\mathcal{S}$, $\mathcal{S}\ni A\mapsto F_{A}(B)\in(-\infty,\infty)$ be a signed measure for every $B\in\mathcal{B}(\mathbb{R})$ such that $0\notin \overline{B}$ and that $(\nu_{0}(A), \nu_{1}(A),F_{A})$ is the characteristic triplet of a random variable, call it $\Lambda(A)$, $\forall A\in\mathcal{S}$. Assume that $F$ satisfies $(\ref{final-assumption})$ and let $\nu$ be defined as in (\ref{nu}). Moreover, define 	
	\begin{equation}\label{lambda3}
	\lambda(A)=|\nu_{0}|(A)+\nu_{1}(A)+\nu(A).
	\end{equation}
	Then $\lambda:\mathcal{S}\mapsto[0,\infty)$ is a measure s.t.~$\lambda(A_{n})\rightarrow0$ implies $\Lambda(A_{n})\stackrel{p}{\rightarrow}0$ for every $\{A_{n}\}\subset\mathcal{S}$.
\end{pro}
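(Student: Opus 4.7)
The plan is to verify the two assertions separately: (i) that $\lambda$ is a well-defined $[0,\infty)$-valued measure on $\mathcal{S}$, and (ii) that $\lambda(A_{n})\to 0$ forces $\Lambda(A_{n})\stackrel{p}{\rightarrow}0$.

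For (i), I would decompose $\lambda$ into its three summands. The total variation $|\nu_{0}|$ is a measure on $\mathcal{S}$ by Lemma 2.16 in \cite{Pass} applied to $\nu_{0}$, and it is finite since $\nu_{0}$ takes real values. The set function $\nu_{1}$ is a finite measure by hypothesis. Finally, $\nu$ is a measure on $\mathcal{S}$ with $\nu(A)<\infty$ for every $A\in\mathcal{S}$: this is exactly the object built in Theorem \ref{pr-monster} applied to the finite signed bimeasure $J(A,B)=\int_{B}(1\wedge x^{2})F_{A}(dx)$ (which is indeed a finite signed bimeasure by Lemma \ref{l1} and assumption (\ref{final-assumption})), and it has already been recorded in the paragraph preceding the proposition. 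Countable additivity and non-negativity of $\lambda$ then follow immediately from those of its three summands, each of which takes values in $[0,\infty)$.

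For (ii), I would invoke the L\'{e}vy continuity theorem: since the limit $0$ is deterministic, $\Lambda(A_{n})\stackrel{d}{\to}0$ is equivalent to $\Lambda(A_{n})\stackrel{p}{\rightarrow}0$, so it suffices to show $\hat{\mathcal{L}}(\Lambda(A_{n}))(\theta)\to 1$ for every $\theta\in\mathbb{R}$. Writing out the characteristic function,
\[
\hat{\mathcal{L}}(\Lambda(A_{n}))(\theta) = \exp\!\left(i\theta\,\nu_{0}(A_{n}) - \frac{\theta^{2}}{2}\nu_{1}(A_{n}) + \int_{\mathbb{R}} (e^{i\theta x} - 1 - i\theta\tau(x))\,F_{A_{n}}(dx)\right),
\]
the assumption $\lambda(A_{n})\to 0$ forces each of $|\nu_{0}|(A_{n})$, $\nu_{1}(A_{n})$, $\nu(A_{n})$ to vanish. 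Hence $|\nu_{0}(A_{n})|\leq |\nu_{0}|(A_{n})\to 0$ kills the drift term, and $\nu_{1}(A_{n})\to 0$ kills the Gaussian term. For the L\'{e}vy integral I would use the standard pointwise bound $|e^{i\theta x}-1-i\theta\tau(x)|\leq C_{\theta}(1\wedge x^{2})$, with $C_{\theta}$ depending only on $\theta$ (a routine case split between $|x|\leq 1$ via a second-order Taylor estimate and $|x|>1$ via boundedness of $\tau$), combined with the key inequality $\int_{\mathbb{R}}(1\wedge x^{2})|F_{A_{n}}|(dx)\leq \nu(A_{n})$ recorded in (\ref{xi}), to obtain
\[
\left|\int_{\mathbb{R}} (e^{i\theta x} - 1 - i\theta\tau(x))F_{A_{n}}(dx)\right| \leq C_{\theta}\int_{\mathbb{R}}(1\wedge x^{2})|F_{A_{n}}|(dx)\leq C_{\theta}\,\nu(A_{n})\to 0.
\]
Thus the exponent converges to $0$, so $\hat{\mathcal{L}}(\Lambda(A_{n}))(\theta)\to 1$ pointwise, and L\'{e}vy continuity closes the argument.

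I do not anticipate any substantial obstacle, because the genuinely non-trivial work has already been done. The fact that $\nu$ is a finite non-negative measure on $\mathcal{S}$ is a direct consequence of Theorem \ref{pr-monster} under assumption (\ref{final-assumption}), and the control (\ref{xi}) of the total variation of the quasi-L\'{e}vy type measures $F_{A}$ by $\nu$ is precisely what allows the signed case to be handled by the same estimate as in the classical positive L\'{e}vy-measure setting. The only minor care point is the uniform pointwise bound on $|e^{i\theta x}-1-i\theta\tau(x)|$ by a multiple of $1\wedge x^{2}$, which is routine.
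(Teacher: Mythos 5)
Your proposal is correct and follows essentially the same route as the paper: the paper likewise notes that $\lambda$ is a sum of three measures, deduces from $\lambda(A_{n})\to 0$ that $|\nu_{0}|(A_{n})$, $\nu_{1}(A_{n})$, $\nu(A_{n})$ all vanish, invokes the inequality (\ref{xi}) to control $\int_{\mathbb{R}}(1\wedge x^{2})|F_{A_{n}}|(dx)$, and concludes by the L\'{e}vy continuity theorem. You merely spell out the standard bound $|e^{i\theta x}-1-i\theta\tau(x)|\leq C_{\theta}(1\wedge x^{2})$ that the paper leaves implicit.
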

\begin{proof}	
	$\lambda(A)$ is a sum of three measures on $\mathcal{S}$, hence it is a measure on $\mathcal{S}$. Further, let $\lambda(A_{n})\rightarrow0$ for some $\{A_{n}\}\subset\mathcal{S}$, then we have that $|\nu_{0}|$, $\nu_{1}$ and $\nu$ go to zero. Recall $\nu$ satisfies (\ref{xi}). Then, by L\'{e}vy continuity theorem $\Lambda(A_{n})\stackrel{p}{\rightarrow}0$ as $n\rightarrow\infty$.
\end{proof}
\begin{defn}
	Since $\lambda(S_{n})<\infty$, $n=1,2,...$ we extend $\lambda$ to a $\sigma$-finite measure on $(S,\sigma(\mathcal{S}))$; we call $\lambda$ the \textnormal{control measure} of $\Lambda$.
\end{defn}
\begin{lem}\label{l4}
	Let $F_{\cdot}$ be as in Proposition \ref{pro-misure-generale}. There exists a function $\rho:S\times\mathcal{B}_{0}(\mathbb{R})\mapsto \mathbb{R}$ such that
	\\ \textnormal{(i)} $\rho(s,\cdot)$ is a quasi-L\'{e}vy type measure on $\mathcal{B}(\mathbb{R})$, for every $s\in S$, with positive and negative parts denoted by $\rho^{+}(s,\cdot)$ and $\rho^{-}(s,\cdot)$,
	\\ \textnormal{(ii)} $\rho^{+}(\cdot,B)$ and $\rho^{-}(\cdot,B)$, are $\sigma(\mathcal{S})$-measurable functions, for every $B\in\mathcal{B}(\mathbb{R})$,
	\\ Moreover, there exist two unique $\sigma$-finite measures $\tilde{F}^{+}$ and $\tilde{F}^{-}$ on $\sigma(\mathcal{S})\otimes\mathcal{B}(\mathbb{R})$ s.t.
	\begin{equation*}
\int_{S\times\mathbb{R}}h(s,x)\tilde{F}^{+}(ds,dx)=\int_{S}\int_{\mathbb{R}}h(s,x)\rho^{-}(s,dx)\lambda(ds)
	\end{equation*}
	for every $\sigma(\mathcal{S})\otimes\mathcal{B}(\mathbb{R})$-measurable function $h:S\times\mathbb{R}\mapsto[0,\infty]$, and the same holds for $\tilde{F}^{-}$. This equality can be extended to real and complex-valued functions $h$. Finally, for every $A\in\mathcal{S}$ and for every $\mathcal{B}(\mathbb{R})$-measurable real function $g$ s.t.~$\int_{A}\int_{\mathbb{R}}|g(x)||\rho|(s,dx)\lambda(ds)<\infty$, we have that 
	\begin{equation*}
	\int_{\mathbb{R}}g(x)F_{A}(dx)=\int_{A}\int_{\mathbb{R}}g(x)\rho(s,dx)\lambda(ds),
	\end{equation*}
	and for every $B\in\mathcal{B}(\mathbb{R})$ s.t.~$0\notin \overline{B}$,
	\begin{equation*}
	\tilde{F}^{+}(A,B)\geq F_{A}^{+}(B)\quad\text{and}\quad \tilde{F}^{-}(A,B)\geq F_{A}^{-}(B).
	\end{equation*}
\end{lem}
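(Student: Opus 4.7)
The plan is to apply Theorem \ref{pr-monster} to the finite signed bimeasure $J(A,B) = \int_B (1\wedge x^2)\,F_A(dx)$ produced by Lemma \ref{l1}, and then to recover $F$ itself by inverting the weight $(1\wedge x^2)$ and converting the base measure from $\nu$ to $\lambda$. First I would check that $J$ meets the hypotheses of Theorem \ref{pr-monster}: its condition (c) is exactly (\ref{final-assumption}), and $(\mathbb{R},\mathcal{B}(\mathbb{R}))$ is a Lusin measurable space. The theorem then supplies a $\sigma$-finite measure $\nu$ on $\sigma(\mathcal{S})$ coinciding with (\ref{nu}), sub-Markovian kernels $q, q^+, q^-$ from $(S,\sigma(\mathcal{S}))$ to $(\mathbb{R},\mathcal{B}(\mathbb{R}))$ with $q = q^+ - q^-$ the pointwise Jordan decomposition, and the representation $J(A,B) = \int_A q(s,B)\,\nu(ds)$ for every $A \in \mathcal{S}$ and $B \in \mathcal{B}(\mathbb{R})$.

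Second, because $\nu \leq \lambda$ by (\ref{lambda3}), I set $\phi := d\nu/d\lambda$, a $[0,1]$-valued $\sigma(\mathcal{S})$-measurable function, and define for $s \in S$ and $B \in \mathcal{B}_0(\mathbb{R})$
\[
\rho(s,B) := \phi(s)\int_B (1\wedge x^2)^{-1}\,q(s,dx), \qquad \rho^\pm(s,B) := \phi(s)\int_B (1\wedge x^2)^{-1}\,q^\pm(s,dx).
\]
Since $(1\wedge x^2)^{-1}$ is bounded on each $\mathcal{B}_r(\mathbb{R})$ and $|q|(s,\mathbb{R}) \leq 2$, $\rho(s,\cdot)$ is a finite signed measure on $\mathcal{B}_r(\mathbb{R})$ for every $r>0$ and satisfies $\int_{\mathbb{R}} (1\wedge x^2)\,|\rho|(s,dx) \leq 2\phi(s) < \infty$, which gives property (i). The mutual singularity of $q^+(s,\cdot)$ and $q^-(s,\cdot)$ transfers to $\rho^\pm(s,\cdot)$, so these are the Jordan decomposition of $\rho(s,\cdot)$. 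Property (ii) then follows from the $\sigma(\mathcal{S})$-measurability of $q^\pm(\cdot,B)$ provided by Theorem \ref{pr-monster} and of $\phi$, combined with a standard simple-function approximation of $(1\wedge x^2)^{-1}$.

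Third, I define $\tilde{F}^\pm(C) := \int_S\int_{\mathbb{R}} \mathbf{1}_C(s,x)\,\rho^\pm(s,dx)\,\lambda(ds)$ for $C \in \sigma(\mathcal{S})\otimes\mathcal{B}(\mathbb{R})$. Exhausting the product space by $S_n \times \{|x| > 1/k\}$, on which $\rho^\pm \leq k^2\phi$, yields $\tilde{F}^\pm(S_n \times \{|x|>1/k\}) \leq k^2\nu(S_n) < \infty$, establishing $\sigma$-finiteness; the extension of the integration identity from indicators to nonnegative $h$ is a routine monotone-class argument, and to real/complex $h$ by linearity. Uniqueness is immediate since the rectangles form a generating $\pi$-system on which the formula specifies the measures. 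The identity $\int g\,dF_A = \int_A\int g\,\rho(s,\cdot)\,\lambda(ds)$ is first checked for $g=\mathbf{1}_B$ with $B \in \mathcal{B}_0(\mathbb{R})$ via signed-Fubini on the restriction to $S_n \times \{|x|\geq 1/k\}$, giving
\[
\int_A \rho(s,B)\,\lambda(ds) = \int_A\!\int_B (1\wedge x^2)^{-1}\,q(s,dx)\,\nu(ds) = \int_B (1\wedge x^2)^{-1}\,J(A,dx) = F_A(B),
\]
and is then extended to general $g$ by linearity and dominated convergence under the stated integrability. Finally, the bound $\tilde{F}^\pm(A,B) \geq F_A^\pm(B)$ for $B$ with $0 \notin \overline{B}$ follows from the Hahn decomposition $E_A^\pm$ of $F_A$:
\[
F_A^+(B) = F_A(B \cap E_A^+) = \int_A \rho(s, B\cap E_A^+)\,\lambda(ds) \leq \int_A \rho^+(s,B)\,\lambda(ds) = \tilde{F}^+(A,B),
\]
and symmetrically for $F_A^-$. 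I expect the main obstacle to be justifying the signed-measure Fubini interchange when inverting $(1\wedge x^2)$ on sets escaping $\{0\}$ while simultaneously changing base measure from $\nu$ to $\lambda$; this is handled by first working on each $S_n \cap \{|x|\geq 1/k\}$, where all measures involved are finite, and then passing to the limit via monotone convergence.
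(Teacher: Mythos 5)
Your proposal is correct and follows essentially the same route as the paper: apply Theorem \ref{pr-monster} to the bimeasure $J$, define $\rho^{\pm}(s,dx)=\frac{d\nu}{d\lambda}(s)(1\wedge x^{2})^{-1}q^{\pm}(s,dx)$, build $\tilde{F}^{\pm}$ by integrating these kernels against $\lambda$, and recover $F_{A}$ and the inequalities $\tilde{F}^{\pm}(A,B)\geq F_{A}^{\pm}(B)$ from the representation of $J$. The only point the paper treats more explicitly is the normalisation $q^{\pm}(s,\{0\})=0$ for every $s$ (needed before inverting $1\wedge x^{2}$), which your restriction of $\rho$ to $\mathcal{B}_{0}(\mathbb{R})$ handles implicitly.
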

\begin{proof}
	First, notice that $J(A,B)$ satisfies the assumptions of Theorem \ref{pr-monster} with $(T,\mathcal{A})=(S,\mathcal{S})$ and $(X,\mathcal{B})=(\mathbb{R},\mathcal{B}(\mathbb{R}))$.
	Therefore, there exists a finite real valued set function $Q$ on $\bigcup_{D\in\mathcal{S}}(\mathcal{S}\cap D)\otimes\mathcal{B}$ such that
	\begin{equation*}
	Q(A\times B)=J(A,B)=\int_{A}q(s,B)\nu(ds)=\int_{A}q^{+}(s,B)\nu(ds)-\int_{A}q^{-}(s,B)\nu(ds),
	\end{equation*}
	where $q^{+}$ and $q^{-}$ satisfy $(d)'$ and $(e)'$, and $q$ satisfies $(d)$ and $(e)$ of Theorem \ref{pr-monster}. Since $J(A,\{0\})=0$ for every $A\in\mathcal{S}$ and since $q^{+}(s,\cdot)$ and $q^{-}(s,\cdot)$ are mutually singular, we deduce that $q^{+}(s,\{0\})=0$ and $q^{-}(s,\{0\})=0$ $\nu$-a.e.. 
	
	Observe that we can consider $q^{+}(s,\{0\})=0$ and $q^{-}(s,\{0\})=0$ for every $s\in S$. This is because of the following argument. Let $q^{+}(s,\{0\})=0$ $\nu$-a.e.~and let $\tilde{q}^{+}(s,B)=q^{+}(s,B\setminus\{0\})$ for every $B\in\mathcal{B}(\mathbb{R})$. Then $s\mapsto\tilde{q}^{+}(s,B)$ is $\sigma(\mathcal{S})$-measurable since $s\mapsto q^{+}(s,B\setminus\{0\})$ is $\sigma(\mathcal{S})$-measurable, for every $B\in\mathcal{B}(\mathbb{R})$. Moreover, for every sequence of disjoint sets $B_{1},B_{2},...\in\mathcal{B}(\mathbb{R})$ $\tilde{q}^{+}(s,\cup_{i=1}^{\infty} B_{i})=q^{+}(s,\cup_{i=1}^{\infty}B\setminus\{0\})=\sum_{i=1}^{\infty}q^{+}(s,B_{i}\setminus\{0\})=\sum_{i=1}^{\infty}\tilde{q}^{+}(s,B_{i})$. Therefore, $\tilde{q}^{+}$ satisfies the same properties of $q^{+}$, namely  $(d)'$ and $(e)'$ of Theorem \ref{pr-monster}, and $\tilde{q}^{+}(s,\cdot)=q^{+}(s,\cdot)$, off a set of $\nu$-measure zero. The same applies to $q^{-}$ and it is possible to see that $\tilde{q}^{+}(s,\cdot)$ and $\tilde{q}^{-}(s,\cdot)$ are the Jordan decomposition of a signed measure $\tilde{q}(s,\cdot)$, for every $s\in S$, and that $\tilde{q}(s,\cdot)=q(s,\cdot)$, off a set of $\nu$-measure zero. Hence, all the results of Theorem \ref{pr-monster} applied to the present setting remains unchanged (indeed $\tilde{q}$ can be seen as the `$q_{1}$' in the statement of Theorem \ref{pr-monster}). Thus, we consider $q^{+}(s,\{0\})=q^{-}(s,\{0\})=0$ for every $s\in S$.
	
	Since $\lambda\gg\nu$, define
	\begin{equation*}
	\rho^{+}(s,dx):=\frac{d\nu}{d\lambda}(s)(1\wedge x^{2})^{-1}q^{+}(s,dx),\,\,\,\,\textnormal{and}\,\,\,\, \rho^{-}(s,dx):=\frac{d\nu}{d\lambda}(s)(1\wedge x^{2})^{-1}q^{-}(s,dx).
	\end{equation*}
	Thus, $\rho^{+}(\cdot,B)$ and $\rho^{-}(\cdot,B)$ are Borel measurable (precisely $\sigma(\mathcal{S})$-measurable) functions. Further, notice that
	\begin{equation*}
	\int_{\mathbb{R}}(1\wedge x^{2})\rho^{+}(s,dx)=\frac{d\nu}{d\lambda}(s)\int_{\mathbb{R}}q^{+}(s,dx)\leq 1,
	\end{equation*}
	where the last inequality comes from the fact that $\frac{d\nu}{d\lambda}(s)\leq 1$ for all $s\in S$. Hence, $\rho^{+}(s,\cdot)$ is a L\'{e}vy measure on $\mathbb{R}$ for all $s\in S$. The same holds for $\rho^{-}(s,\cdot)$. Further, let
	\begin{equation*}
	\rho(s,B):=\rho^{+}(s,B)-\rho^{-}(s,B)\quad\text{for all $s\in S$, $B\in\mathcal{B}(\mathbb{R})$ s.t.~$0\notin \overline{B}$}.
	\end{equation*}
	Then $\rho(s,\cdot)$ is a quasi-L\'{e}vy type measure by Lemma 2.14 in \cite{Pass}, thus obtaining (i). Using the fact that $\rho^{+}(s,\cdot)$ and $\rho^{-}(s,\cdot)$ are mutually singular for every $B\in\mathcal{B}(\mathbb{R})$ s.t.~$0\notin\overline{B}$, then they are the positive and negative parts of $\rho((s,\cdot))$ for every $s\in S$, and so we obtain (ii). 
	
	Now, let
	\begin{equation}\label{F(C)}
	\tilde{F}^{+}(C)=\int_{S}\int_{\mathbb{R}}\textbf{1}_{C}(s,x)\rho^{+}(s,dx)\lambda(ds),
	\end{equation}
	where $C\in\sigma(\mathcal{S})\otimes\mathcal{B}(\mathbb{R})$, then $\tilde{F}^{+}$ is a well defined measure that satisfies, for every $A\in\mathcal{S}$ and $B\in\mathcal{B}(\mathbb{R})$,
	\begin{equation*}
	\tilde{F}^{+}(A\times B)=\int_{A}\int_{B}\rho^{+}(s,dx)\lambda(ds)	=\int_{A}\int_{B}(1\wedge x^{2})^{-1}q^{+}(s,dx)\xi(ds)
	\end{equation*}
	\begin{equation*}
	=\int_{A\times B}(1\wedge x^{2})^{-1}Q^{+}(ds,dx)	\geq\int_{B}(1\wedge x^{2})^{-1}J^{+}(A,dx)=\int_{B}F^{+}_{A}(dx)=F^{+}_{A}(B),
	\end{equation*}
	where $Q^{+}$ is the positive extension of $Q$ (see Theorem \ref{pr-monster}), thus $Q^{+}$ is a measure on $\sigma(\mathcal{S})\otimes\mathcal{B}(\mathbb{R})$. Concerning $J^{+}(A,dx)$, recall that the notation $M^{+}$ and $M^{-}$ for a bimeasure $M$ stands for the Jordan decomposition of $B\mapsto M(A,B)$ for fixed $A$. The same applies to $\tilde{F}^{-}$. Finally, notice that for any $\mathcal{B}(\mathbb{R})$-measurable real function $g$ s.t.~$\int_{A}\int_{\mathbb{R}}|g(x)||\rho|(s,dx)\lambda(ds)<\infty$ we have
	\begin{equation*}
	\int_{A}\int_{\mathbb{R}}g(x)\rho(s,dx)\lambda(ds)
	=\int_{A}\int_{\mathbb{R}}g(x)\rho^{+}(s,dx)\lambda(ds)-\int_{A}\int_{\mathbb{R}}g(x)\rho^{-}(s,dx)\lambda(ds)
	\end{equation*}
	\begin{equation*}
	=\int_{A\times \mathbb{R}}g(x)(1\wedge x^{2})^{-1}Q(ds,dx)=\int_{\mathbb{R}}g(x)(1\wedge x^{2})^{-1}J(A,dx)=\int_{\mathbb{R}}g(x)F_{A}(dx).
	\end{equation*}
\end{proof}
\begin{rem}
\textnormal{The discussion at the beginning of the proof of Lemma \ref{l4} on the possibility to consider $q^{+}(s,\{0\})=q^{-}(s,\{0\})=0$, for every $s\in S$, is implicit in the proofs of Lemma 2.3 in \cite{RajRos}, and of Lemmas 5.19 and 5.28 in \cite{Pass}. We decided to write it explicitly for the sake of clarity and completeness and because our setting requires more attention to detail.}
\end{rem}
Using the above results, we obtain the following proposition.
\begin{pro}\label{pr-3}
	Under the setting of Proposition \ref{pro-misure-generale}, the characteristic function of $\Lambda(A)$ can be written in the form:
	\begin{equation*}
	\mathbb{E}(e^{i\theta\Lambda(A)})=\exp\left(\int_{A}K(\theta,s)\lambda(ds)\right),\quad \theta\in\mathbb{R},A\in\mathcal{S},
	\end{equation*}
	where
	\begin{equation*}
	K(\theta,s)=i\theta a(s)-\frac{\theta^{2}}{2}\sigma^{2}(s)+\int_{\mathbb{R}}e^{i\theta x}-1-i\theta\tau(x)\rho(s,dx),
	\end{equation*}
	$a(s)=\frac{d\nu_{0}}{d\lambda}(s)$, $\sigma^{2}(s)=\frac{d\nu_{1}}{d\lambda}(s)$ and $\rho$ is given by Lemma \ref{l4}, and $\exp(K(\theta,s))$ is the characteristic function of a QID random variable if it exists. Moreover, we have
	\begin{equation*}
	|a(s)|+ \sigma^{2}(s)+\frac{d\nu}{d\lambda}(s)=1,\quad\quad \text{$\lambda$-a.e.}.
	\end{equation*}
\end{pro}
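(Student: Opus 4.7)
The plan is to rewrite each of the three components of the characteristic triplet $(\nu_{0}(A),\nu_{1}(A),F_{A})$ as integrals over $A$ against the control measure $\lambda$ and then to combine them inside the L\'{e}vy--Khintchine exponent. From the definition $\lambda=|\nu_{0}|+\nu_{1}+\nu$ in (\ref{lambda3}) we have $|\nu_{0}|\ll\lambda$ and $\nu_{1}\ll\lambda$ on every $S_{n}$, and hence on $\sigma(\mathcal{S})$ after the Carath\'{e}odory extension. The Radon--Nikodym theorem thus supplies $a(s):=d\nu_{0}/d\lambda(s)$ and $\sigma^{2}(s):=d\nu_{1}/d\lambda(s)$ with $\nu_{0}(A)=\int_{A}a(s)\lambda(ds)$ and $\nu_{1}(A)=\int_{A}\sigma^{2}(s)\lambda(ds)$ for every $A\in\mathcal{S}$.

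Next, I use Lemma \ref{l4} to rewrite the jump part. Fix $\theta\in\mathbb{R}$ and put $g_{\theta}(x):=e^{i\theta x}-1-i\theta\tau(x)$. The standard estimate $|g_{\theta}(x)|\leq C_{\theta}(1\wedge x^{2})$ together with the inequality
\begin{equation*}
\int_{\mathbb{R}}(1\wedge x^{2})\rho^{\pm}(s,dx)=\frac{d\nu}{d\lambda}(s)\int_{\mathbb{R}}q^{\pm}(s,dx)\leq \frac{d\nu}{d\lambda}(s)\leq 1
\end{equation*}
derived inside the proof of Lemma \ref{l4} yields $\int_{A}\int_{\mathbb{R}}|g_{\theta}(x)||\rho|(s,dx)\lambda(ds)\leq 2C_{\theta}\lambda(A)<\infty$, where finiteness uses that $\lambda$ is finite on $\mathcal{S}$ by Proposition \ref{pro-misure-generale}. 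The integrability hypothesis of Lemma \ref{l4} is therefore met and we obtain
\begin{equation*}
\int_{\mathbb{R}}\bigl(e^{i\theta x}-1-i\theta\tau(x)\bigr)F_{A}(dx)=\int_{A}\int_{\mathbb{R}}\bigl(e^{i\theta x}-1-i\theta\tau(x)\bigr)\rho(s,dx)\lambda(ds).
\end{equation*}
Substituting the three identities into the L\'{e}vy--Khintchine exponent supplied by Lemma \ref{l1} for $\hat{\mathcal{L}}(\Lambda(A))(\theta)$ and collecting terms under the single integral $\int_{A}\cdot\,\lambda(ds)$ produces exactly $\exp(\int_{A}K(\theta,s)\lambda(ds))$ with $K$ as stated. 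That $\exp(K(\theta,s))$ is the characteristic function of a QID random variable \emph{whenever it exists} is just the observation that $(a(s),\sigma^{2}(s),\rho(s,\cdot))$ has the form of a characteristic triplet, but $\rho(s,\cdot)$ is only guaranteed to be a quasi-L\'{e}vy \emph{type} measure (Lemma \ref{l4}(i)), which explains the caveat in the statement.

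For the normalization, the defining equality $\lambda=|\nu_{0}|+\nu_{1}+\nu$ yields by uniqueness of the Radon--Nikodym derivative
\begin{equation*}
\frac{d|\nu_{0}|}{d\lambda}(s)+\frac{d\nu_{1}}{d\lambda}(s)+\frac{d\nu}{d\lambda}(s)=1\quad\text{$\lambda$-a.e.}
\end{equation*}
Since $\nu_{0}^{+}$ and $\nu_{0}^{-}$ are mutually singular, their Radon--Nikodym derivatives with respect to $\lambda$ have disjoint supports up to $\lambda$-null sets, so $d|\nu_{0}|/d\lambda=d\nu_{0}^{+}/d\lambda+d\nu_{0}^{-}/d\lambda=|d\nu_{0}/d\lambda|=|a(s)|$ $\lambda$-a.e., completing the identity.

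The single non-trivial step is the verification of the integrability condition required to invoke Lemma \ref{l4} for the function $g_{\theta}$; once that is established the proof reduces to routine Radon--Nikodym differentiation and an application of Fubini's theorem inside the log-characteristic function. The finiteness of $\lambda$ on $\mathcal{S}$, which itself depends on the key assumption (\ref{final-assumption}) guaranteeing $\nu(A)<\infty$, is the load-bearing ingredient throughout.
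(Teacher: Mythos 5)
Your proposal is correct and follows essentially the same route as the paper: the paper's (very terse) proof likewise reduces the first claim to Lemma \ref{l4} applied to $g_{\theta}(x)=e^{i\theta x}-1-i\theta\tau(x)$ inside the L\'{e}vy--Khintchine exponent of (\ref{cf}), and obtains the normalization by integrating $|a|+\sigma^{2}+d\nu/d\lambda$ over arbitrary $A\in\mathcal{S}$ against $\lambda=|\nu_{0}|+\nu_{1}+\nu$. Your verification of the integrability hypothesis via $|g_{\theta}(x)|\leq C_{\theta}(1\wedge x^{2})$ and the bound $\int_{\mathbb{R}}(1\wedge x^{2})\rho^{\pm}(s,dx)\leq 1$ is exactly the detail the paper leaves implicit.
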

\begin{proof}
	The first statement follows from Lemma \ref{l4} and the L\'{e}vy-Khintchine formulation of $\Lambda(A)$ (\ref{cf}). The second statement follows from the fact that for every $A\in\mathcal{S}$, we have
	\begin{equation*}
	\int_{A}\left(|a(s)|+\sigma^{2}(s)+\frac{d\nu}{d\lambda}(s)\right)\lambda(ds)=|\nu_{0}|(A)+\nu_{1}(A)+\nu(A)=\lambda(A)=\int_{A}d\lambda(ds).
	\end{equation*}
\end{proof}
Let us recall the definition of $\Lambda$-integrability of a measurable function $f$ (see Definition in \cite{Pass}).
\begin{defn}\label{def-integral}
	Let $f(s)=\sum_{j=1}^{n}x_{j}\mathbf{1}_{A_{j}}(s)$ be a real simple function on $S$, where $A_{j}\in\mathcal{S}$ are disjoint. Then, for every $A\in\sigma(\mathcal{S})$, we define
	\begin{equation*}
	\int_{A}fd\Lambda=\sum_{j=1}^{n}x_{j}\Lambda(A\cap A_{j}).
	\end{equation*}
	Further, a measurable function $f:(S,\sigma(\mathcal{S}))\rightarrow(\mathbb{R},\mathcal{B}(\mathbb{R}))$ is said to be $\Lambda$-integrable if there exists a sequence $\{f_{n}\}$ of simple functions such that 
	\\\textnormal{(i)} $f_{n}\rightarrow f$, $\lambda$-a.e.,
	\\\textnormal{(ii)} for every $A\in\sigma(\mathcal{S})$, the sequence $\{\int_{A}f_{n}d\Lambda\}$ converges in probability as $n\rightarrow\infty$.
	
	If $f$ is $\Lambda$-integrable, then we write
	\begin{equation*}
	\int_{A}fd\Lambda=\mathbb{P}-\lim\limits_{n\rightarrow\infty}\int_{A}f_{n}d\Lambda
	\end{equation*}
	where $\{f_{n}\}$ satisfies $\textnormal{(i)}$ and $\textnormal{(ii)}$.
\end{defn}
As proved in Lemma 5.8 in \cite{Pass} the integral $\int_{A}fd\Lambda$ is well-defined. In the following result we provide a representation for the characteristic function of $\int_{S}fd\Lambda$. The remaining results of this section follow from the same arguments as the ones used in the proof of their respective results in \cite{Pass}. This is because, despite the fact that we are considering a larger class of QID random measures (because our assumptions are weaker), the structure of the control measure and of the representations of $F$, $\tilde{F}^{+}$ and $\tilde{F}^{-}$ are similar to the ones in \cite{Pass}.
\begin{pro}
	Under the setting of Proposition \ref{pro-misure-generale}, if $f$ is $\Lambda$-integrable, then we have that $\int_{S}|K(tf(s),s)|\lambda(ds)<\infty$, where $K$ is given in Proposition \ref{pr-3}, and that
	\begin{equation*}
	\hat{\mathcal{L}}\left(\int_{S}fd\Lambda\right)(\theta)=\exp\left(\int_{S} K(\theta f(s),s)\lambda(ds)\right),\quad \theta\in\mathbb{R}.
	\end{equation*}
\end{pro}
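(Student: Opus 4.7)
The plan is to establish the identity first for simple functions by a direct computation using independence, and then pass to the limit along the approximating sequence from Definition \ref{def-integral}, extracting the integrability $\int_{S}|K(\theta f(s),s)|\lambda(ds)<\infty$ as a byproduct of the convergence of characteristic functions.

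First, for a simple function $f=\sum_{j=1}^{n}x_{j}\mathbf{1}_{A_{j}}$ with disjoint $A_{j}\in\mathcal{S}$, the independence of $\{\Lambda(A_{j})\}$ together with Proposition \ref{pr-3} yields directly
\begin{equation*}
\hat{\mathcal{L}}\left(\int_{S}fd\Lambda\right)(\theta)=\prod_{j=1}^{n}\hat{\mathcal{L}}(\Lambda(A_{j}))(\theta x_{j})=\exp\left(\sum_{j=1}^{n}\int_{A_{j}}K(\theta x_{j},s)\lambda(ds)\right)=\exp\left(\int_{S}K(\theta f(s),s)\lambda(ds)\right),
\end{equation*}
with integrability automatic since $f$ takes finitely many values and each $A_{j}$ sits inside some $S_{m}$ with $\lambda(S_{m})<\infty$.

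Next, I would take a sequence $\{f_{n}\}$ of simple functions witnessing the $\Lambda$-integrability of $f$ (conditions (i)-(ii) of Definition \ref{def-integral}). Condition (ii) with $A=S$ gives $\int_{S}f_{n}d\Lambda\to\int_{S}fd\Lambda$ in probability, so L\'{e}vy's continuity theorem yields
\begin{equation*}
\exp\left(\int_{S}K(\theta f_{n}(s),s)\lambda(ds)\right)\to\hat{\mathcal{L}}\left(\int_{S}fd\Lambda\right)(\theta),\quad\theta\in\mathbb{R}.
\end{equation*}
Pointwise, $K(\theta f_{n}(s),s)\to K(\theta f(s),s)$ $\lambda$-a.e.\ by continuity of $K(\cdot,s)$ and condition (i). Since the limiting characteristic function is continuous with value $1$ at $\theta=0$, it is non-vanishing on a neighborhood of zero; choosing a continuous branch of the logarithm I can transfer exponential convergence into convergence of the exponents,
\begin{equation*}
\int_{S}K(\theta f_{n}(s),s)\lambda(ds)\to\log\hat{\mathcal{L}}\left(\int_{S}fd\Lambda\right)(\theta),
\end{equation*}
for $\theta$ in that neighborhood. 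Splitting $K$ into its drift, Gaussian, and jump pieces and decomposing $\rho=\rho^{+}-\rho^{-}$ via Lemma \ref{l4}, I would apply Fatou's lemma to the nonnegative integrands $|a|$, $\sigma^{2}$, and $(1\wedge x^{2})\rho^{\pm}$ to obtain a uniform bound, thereby deducing $\int_{S}|K(\theta f(s),s)|\lambda(ds)<\infty$ first for small $\theta$ and then for all $\theta\in\mathbb{R}$ by scaling (since $\theta f$ is also $\Lambda$-integrable). Dominated convergence then identifies the limit with $\int_{S}K(\theta f(s),s)\lambda(ds)$, completing the representation.

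The main obstacle lies in this last step: in the quasi-ID (signed) setting $\mathrm{Re}(K(\theta,s))$ is no longer nonpositive because $\rho(s,\cdot)$ is signed, so the classical Rajput-Rosinski argument, which exploits nonpositivity of the real part to control $|\exp(K)|$, does not transfer verbatim. The way around it is to genuinely use the Jordan-type splitting $\rho=\rho^{+}-\rho^{-}$ from Lemma \ref{l4}, bound each piece using the nonnegative sub-Markovian kernels $\rho^{\pm}$ via Fatou-type estimates, and then recombine to simultaneously extract integrability of the complex integrand and the exact integral identity.
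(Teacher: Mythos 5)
The paper itself only says that the statement ``follows from the same arguments as the proof of Proposition 5.9 in \cite{Pass}'', i.e.\ from the Rajput--Rosinski scheme (Proposition 2.6 in \cite{RajRos}) adapted to the signed setting. Your first step (the identity for simple functions via independence and Proposition \ref{pr-3}) matches that scheme and is fine, as is the observation that the limiting characteristic function is non-vanishing (indeed a QID characteristic function has no zeros anywhere, so the distinguished logarithm is defined on all of $\mathbb{R}$ and your detour through a neighbourhood of $0$ plus ``scaling'' is unnecessary). The gap is in the limiting step. You invoke condition (ii) of Definition \ref{def-integral} only at $A=S$, which gives convergence of the single complex number $\int_{S}K(\theta f_{n}(s),s)\lambda(ds)$. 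From this alone you cannot recover any bound on $\int_{S}|K(\theta f_{n}(s),s)|\lambda(ds)$, nor on the separate pieces: the quantities $\int_{S}\int_{\mathbb{R}}(1\wedge|\theta f_{n}(s)x|^{2})\rho^{+}(s,dx)\lambda(ds)$ and the corresponding $\rho^{-}$-term may each diverge while their difference (which is all that enters $K$) stays bounded, and similarly the positive and negative parts of the drift term $U$ may cancel. Fatou's lemma applied to the nonnegative integrands $|a|$, $\sigma^{2}$, $(1\wedge x^{2})\rho^{\pm}$ therefore has no uniform-in-$n$ majorant to work with; ``recombining'' the pieces does not create one. This is precisely the point where the signed setting bites, and your proposed fix does not repair it.

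The argument the paper relies on uses the full strength of condition (ii): for \emph{every} $A\in\sigma(\mathcal{S})$ one has $\int_{A}f_{n}d\Lambda\to\int_{A}fd\Lambda$ in probability, hence (via L\'{e}vy continuity and the non-vanishing of QID characteristic functions) the complex measures $\mu_{n}(A):=\int_{A}K(\theta f_{n}(s),s)\lambda(ds)$ converge \emph{setwise} on $\sigma(\mathcal{S})$. The Nikodym boundedness theorem then gives $\sup_{n}\int_{S}|K(\theta f_{n}(s),s)|\lambda(ds)<\infty$ and the Vitali--Hahn--Saks theorem gives uniform $\lambda$-continuity; combined with the pointwise convergence $K(\theta f_{n}(s),s)\to K(\theta f(s),s)$ $\lambda$-a.e., Fatou yields $\int_{S}|K(\theta f(s),s)|\lambda(ds)<\infty$ and Vitali's convergence theorem identifies the limit as $\int_{S}K(\theta f(s),s)\lambda(ds)$. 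This works irrespective of the signs of $\operatorname{Re}K$ or of $\rho$, which is exactly why the classical ID proof transfers to the QID case. To repair your proposal you should replace the single-set limit plus Fatou-on-pieces step by this setwise-convergence/Nikodym/Vitali--Hahn--Saks argument (Lemma 2.5 in \cite{RajRos} packages it).
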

\begin{proof}
	It follows from the same arguments as the ones used in the proof of Proposition 5.9 in \cite{Pass}.
\end{proof}
We state an important result on the integrability conditions of $\int_{S}fd\Lambda$.
\begin{thm}\label{theorem-general}
	Let $f:S\rightarrow\mathbb{R}$ be a $\sigma(\mathcal{S})$-measurable function and consider the setting of Proposition \ref{pro-misure-generale}. Then $f$ is $\Lambda$-integrable if the following three conditions hold:
	\\\textnormal{(i)} $\int_{S}|U(f(s),s)|\lambda(ds)<\infty$,
	\\\textnormal{(ii)} $\int_{S}|f(s)|^{2}\sigma^{2}(s)\lambda(ds)<\infty$,
	\\\textnormal{(iii)} $\int_{S}V_{0}(f(s),s)\lambda(ds)<\infty$,\\
	where
	\begin{equation*}
U(u,s)=ua(s)+\int_{\mathbb{R}}\tau(xu)-u\tau(x)\rho(s,dx),\,\,\,\,\text{and}\,\,\,\, V_{0}(u,s)=\int_{\mathbb{R}}(1\wedge |xu|^{2})|\rho|(s,dx).
	\end{equation*}
	Further, the characteristic function of $\int_{S}fd\Lambda$ can be written as
	\\\textnormal{(iv)} $\hat{\mathcal{L}}\left(\int_{S}fd\Lambda\right)(\theta)=\exp\left(i\theta a_{f}-\frac{1}{2}\theta^{2}\sigma_{f}^{2}+\int_{\mathbb{R}}e^{i\theta x}-1-i\theta\tau(x)F_{f}(dx) \right)$,
	\begin{equation*}
	\textnormal{where}\quad a_{f}=\int_{S}U(f(s),s)\lambda(ds),\quad \sigma_{f}^{2}=\int_{S}|f(s)|^{2}\sigma^{2}(s)\lambda(ds),\quad\textnormal{and}
	\end{equation*}
	$F_{f}(B)$ is the unique quasi-L\'{e}vy measure determined by the difference of the L\'{e}vy measures $\tilde{F}^{+}_{f}$ and $\tilde{F}^{-}_{f}$, which are defined as: for every $B\in\mathcal{B}(\mathbb{R})$
	\begin{equation*}
	\tilde{F}^{+}_{f}(B)=\tilde{F}^{+}(\{(s,x)\in S\times\mathbb{R}:\, f(s)x\in B\setminus\{0 \} \})\quad\text{and}
		\end{equation*}
	\begin{equation*}
	\tilde{F}^{-}_{f}(B)=\tilde{F}^{-}(\{(s,x)\in S\times\mathbb{R}:\, f(s)x\in B\setminus\{0 \} \}).
	\end{equation*}
\end{thm}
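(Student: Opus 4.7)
The plan is to follow the Rajput--Rosinski template as adapted to the QID setting in \cite{Pass}, proving $\Lambda$-integrability through a Cauchy-in-probability argument on simple approximations and then identifying the limiting characteristic triplet. First I would pick a sequence of simple functions $f_{n}=\sum_{j}x_{j,n}\mathbf{1}_{A_{j,n}}$ with $A_{j,n}\in\mathcal{S}$, $|f_{n}|\le|f|$, and $f_{n}\to f$ $\lambda$-a.e. By the previous proposition each $\int_{A}f_{n}\,d\Lambda$ has characteristic function $\exp\!\big(\int_{A}K(\theta f_{n}(s),s)\,\lambda(ds)\big)$, and the aim is to pass to the limit in this formula.

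The key algebraic step is to rewrite $K(\theta f(s),s)$ by absorbing the change of truncation scale into the drift. Adding and subtracting $i\theta\tau(f(s)x)$ inside the inner integral against $\rho(s,\cdot)$ gives
\begin{equation*}
K(\theta f(s),s)=i\theta\, U(f(s),s)-\tfrac{\theta^{2}}{2}f(s)^{2}\sigma^{2}(s)+\int_{\mathbb{R}}\!\big(e^{i\theta f(s)x}-1-i\theta\tau(f(s)x)\big)\rho(s,dx),
\end{equation*}
where the correction $\int(\tau(f(s)x)-f(s)\tau(x))\rho(s,dx)$ combines with $f(s)a(s)$ to produce $U(f(s),s)$. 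Integrating against $\lambda$ and pushing the double integral through $(s,x)\mapsto f(s)x$, separately for $\tilde F^{+}$ and $\tilde F^{-}$ via Lemma \ref{l4}, converts the quasi-L\'evy part into $\int_{\mathbb{R}}(e^{i\theta y}-1-i\theta\tau(y))F_{f}(dy)$, producing the triplet $(a_{f},\sigma_{f}^{2},F_{f})$ in (iv).

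I would then verify that (i)--(iii) provide the dominators needed for dominated convergence. The standard estimate $|e^{iu}-1-i\tau(u)|\le C(1\wedge u^{2})$ bounds the quasi-L\'evy integrand by a constant multiple of $V_{0}(f(s),s)$, which is $\lambda$-integrable by (iii); the drift and Gaussian parts are controlled by (i) and (ii) respectively. Applied to $K(\theta(f_{n}-f_{m})(s),s)$, together with $|f_{n}-f_{m}|\le 2|f|$ and the standard Rajput--Rosinski inequalities relating $U$ and $V_{0}$ at different scales, these bounds yield
\begin{equation*}
\int_{A}K\!\big(\theta(f_{n}-f_{m})(s),s\big)\lambda(ds)\longrightarrow 0\quad\text{as }n,m\to\infty,
\end{equation*}
for every $\theta\in\mathbb{R}$. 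By the L\'evy continuity theorem the differences $\int_{A}f_{n}\,d\Lambda-\int_{A}f_{m}\,d\Lambda$ converge in distribution to the constant $0$, hence in probability; thus $\{\int_{A}f_{n}\,d\Lambda\}$ is Cauchy in probability and $f$ is $\Lambda$-integrable. A second passage to the limit under the same dominators then delivers the characteristic function in (iv).

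The main obstacle is to confirm that $F_{f}=\tilde F_{f}^{+}-\tilde F_{f}^{-}$ is a genuine quasi-L\'evy type measure and that the change of variables works in the signed setting. Condition (iii) yields $\int_{\mathbb{R}}(1\wedge y^{2})(\tilde F_{f}^{+}+\tilde F_{f}^{-})(dy)<\infty$, and since $|F_{f}|\le\tilde F_{f}^{+}+\tilde F_{f}^{-}$ this forces $\int_{\mathbb{R}}(1\wedge y^{2})|F_{f}|(dy)<\infty$, so $F_{f}$ satisfies Definition \ref{def1}. The pushforward identity itself follows by applying Lemma \ref{l4} separately to $\rho^{+}$ and $\rho^{-}$, relying on the convention $q^{\pm}(s,\{0\})=0$ established in the proof of Lemma \ref{l4} to ensure the map $(s,x)\mapsto f(s)x$ creates no spurious mass at the origin, in accordance with the exclusion of $\{0\}$ in the very definition of $\tilde F_{f}^{\pm}$.
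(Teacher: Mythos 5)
Your proposal is correct and follows essentially the same route as the paper, which proves this theorem simply by deferring to the argument of Theorem 5.10 in \cite{Pass} (itself the Rajput--Rosinski template you reconstruct): simple approximations, the rewriting of $K(\theta f(s),s)$ into the triplet $(U,f^{2}\sigma^{2},\rho)$ form, domination via the scale inequalities between $U$ and $V_{0}$, a Cauchy-in-probability argument through the L\'evy continuity theorem, and identification of $F_{f}$ as the pushforward of $\tilde F^{\pm}$ with the mass-at-zero convention from Lemma \ref{l4}. Nothing in your sketch deviates from or falls short of what the paper's cited proof requires.
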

\begin{proof}
	It follows from the same arguments as the ones used in the proof of Theorem 5.10 in \cite{Pass}. 
\end{proof}
We conclude with a result on the continuity of the stochastic integral mapping. Before presenting the result, we need some preliminaries. Define the Musielak-Orlicz space as in \cite{RajRos}:
\begin{equation*}
L_{\Phi_{p}}(S;\lambda)=\left\{f\in L_{0}(S;\lambda):\,\,\,\int_{S}\Phi_{p}(|f(s)|,s)\lambda(ds)<\infty \right\}.
\end{equation*}
By an equivalent result of Lemma 6.1 in \cite{Pass} applied here it is possible to see that the space $L_{\Phi_{p}}(S;\lambda)$ is a complete linear metric space with the $F$-norm defined by 
\begin{equation*}
\| f\|_{\Phi_{p}}=\inf\limits_{c>0}\left\{\int_{S}\Phi_{p}(c^{-1}|f(s)|,s)\lambda(ds)\leq c \right\}.
\end{equation*}
Simple functions are dense in $L_{\Phi_{p}}(S;\lambda)$ and $L_{\Phi_{p}}(S;\lambda)\hookrightarrow L_{0}(S;\lambda)$ is continuous, where in the present case $L_{0}(S;\lambda)$ is equipped with the topology of convergence in $\lambda$ measure on every set of finite $\lambda$-measure. Moreover, we have $\| f_{n}\|_{\Phi_{p}}\rightarrow0\Leftrightarrow \int_{S}\Phi_{p}(|f(s)|,s)\lambda(ds)\rightarrow0$. Now, define, for $0\leq p\leq q$, $u\in\mathbb{R}$ and $s\in S$,
\begin{equation}\label{Phi}
\Phi_{p}(u,s)=U^{*}(u,s)+u^{2}\sigma^{2}(s)+V_{p}(u,s),
\end{equation}
where
\begin{equation*}
U^{*}(u,s)=\sup\limits_{|c|\leq 1}|U(cu,s)|\,\,\,\,\textnormal{and}\,\,\,\, V_{p}(u,s)=\int_{\mathbb{R}}|xu|^{p}\textbf{1}_{|xu|>1}(x) +|xu|^{2}\textbf{1}_{|xu|\leq 1}(x)|\rho|(s,dx).
\end{equation*}
\begin{thm}\label{Orlicz-theorem}
	Let $0\leq p\leq q$ and $\Phi_{p}$ defined as in $(\ref{Phi})$. Then
	\begin{equation*}
	\left\{\textnormal{$f:$ $f$ is $\Lambda$-integrable and }\mathbb{E}\left[\left|\int_{S}fd\Lambda\right|^{p}\right]<\infty\right\}\supset L_{\Phi_{p}}(S;\lambda)
	\end{equation*}
	and the linear mapping
	\begin{equation*}
	L_{\Phi_{p}}(S;\lambda)\ni f\mapsto\int_{S}fd\Lambda\in L_{p}(\Omega;\mathbb{P})
	\end{equation*}
	is continuous.
\end{thm}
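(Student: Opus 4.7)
The plan is to reduce the theorem to Theorem \ref{theorem-general} by verifying its three hypotheses pointwise for every $f \in L_{\Phi_p}(S;\lambda)$, and then to use the $F$-norm characterization $\|f_n\|_{\Phi_p} \to 0 \iff \int_S \Phi_p(|f_n(s)|,s)\lambda(ds) \to 0$ together with the explicit characteristic function representation (iv) of Theorem \ref{theorem-general} to upgrade pointwise integrability to continuity. Both directions rely on controlling the three summands of $\Phi_p$ separately and then recombining via the characteristic triplet of $\int_S f d\Lambda$.

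For the inclusion, fix $f \in L_{\Phi_p}(S;\lambda)$, so that $\int_S [U^*(|f(s)|,s) + |f(s)|^2\sigma^2(s) + V_p(|f(s)|,s)]\lambda(ds) < \infty$. One verifies three elementary pointwise dominations: $|U(f(s),s)| \leq U^*(|f(s)|,s)$ directly from the definition of $U^*$; $|f(s)|^2\sigma^2(s) \leq \Phi_p(|f(s)|,s)$ trivially; and $V_0(f(s),s) \leq V_p(|f(s)|,s)$, the last one because for every $p \geq 0$ one has $1\wedge |xu|^2 \leq |xu|^p \mathbf{1}_{|xu|>1}(x) + |xu|^2\mathbf{1}_{|xu|\leq 1}(x)$ (on $\{|xu|\leq 1\}$ both sides equal $|xu|^2$, while on $\{|xu|>1\}$ the left side is $1$ and the right side is $|xu|^p \geq 1$). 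These three dominations supply conditions (i), (ii), (iii) of Theorem \ref{theorem-general} verbatim, yielding $\Lambda$-integrability and the representation of $\hat{\mathcal{L}}(\int_S f d\Lambda)$ with characteristic triplet $(a_f,\sigma_f^2,F_f)$ described there.

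The substantial step is the $p$-th moment bound $\mathbb{E}|\int_S f d\Lambda|^p < \infty$. Using the representation of $F_f$ via $\tilde F^+_f$ and $\tilde F^-_f$ in Theorem \ref{theorem-general} and the change-of-variable identity $\tilde F^{\pm}_f(B) = \int_S \rho^{\pm}(s, \{x: f(s)x \in B\setminus\{0\}\})\lambda(ds)$ from Lemma \ref{l4}, one obtains
\begin{equation*}
\int_{\mathbb{R}}(|x|^p \wedge |x|^2)|F_f|(dx) \leq \int_S V_p(|f(s)|,s)\lambda(ds) < \infty.
\end{equation*}
Combining this with the standard moment estimate $\mathbb{E}|X|^p \leq C_p(|a_f|^p + \sigma_f^p + \int_{\mathbb{R}}(|x|^p \wedge |x|^2)|F_f|(dx))$ applied to the two ID majorants $Y$ and $Z$ furnished by the QID relation $X + Y \stackrel{d}{=} Z$ yields the desired moment bound.

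For continuity, suppose $\|f_n\|_{\Phi_p}\to 0$; then each of the three integrals $\int U^*(|f_n|,s)\lambda(ds)$, $\int |f_n|^2\sigma^2\lambda(ds)$, $\int V_p(|f_n|,s)\lambda(ds)$ tends to zero. Dominated convergence applied to $K(\theta f_n(s),s)$ together with Proposition \ref{pr-3} gives $\hat{\mathcal L}(\int_S f_n d\Lambda)(\theta)\to 1$ pointwise, so $\int_S f_n d\Lambda \to 0$ in probability by L\'evy's continuity theorem; the uniform moment estimate from the previous paragraph, applied with $f$ replaced by $f_n$, provides uniform $p$-integrability and upgrades this to convergence in $L_p(\Omega;\mathbb{P})$. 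The main obstacle is Step 2: because $F_f$ is a genuine quasi-L\'evy measure, one cannot invoke the positive-case moment inequalities of \cite{RajRos} directly, and must instead argue on the Jordan/Hahn decomposition delivered by Theorem \ref{pr-monster} via the sub-Markov kernels $\rho^+, \rho^-$, which is precisely where the extension theorem of Section \ref{Sec-Spectral} enters essentially.
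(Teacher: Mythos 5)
Your proposal is correct and follows essentially the same route as the paper, whose proof simply defers to Theorem 6.3 of \cite{Pass} (itself the QID analogue of Theorem 3.3 in \cite{RajRos}): verify conditions (i)--(iii) of Theorem \ref{theorem-general} via the pointwise dominations $|U(f(s),s)|\le U^{*}(|f(s)|,s)$ and $V_{0}\le V_{p}$, obtain the $p$-th moment through ID majorants built from the L\'{e}vy measures $\tilde{F}^{+}_{f}$ and $\tilde{F}^{-}_{f}$, and deduce continuity from the $F$-norm characterization $\|f_{n}\|_{\Phi_{p}}\to0\Leftrightarrow\int_{S}\Phi_{p}(|f_{n}(s)|,s)\lambda(ds)\to0$. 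One small correction to your moment step: for $p>2$ the relevant tail functional is $\int_{|x|>1}|x|^{p}\,\tilde{F}^{\pm}_{f}(dx)+\int_{|x|\le1}|x|^{2}\,\tilde{F}^{\pm}_{f}(dx)$, which is exactly what $\int_{S}V_{p}(|f(s)|,s)\lambda(ds)$ controls, rather than $\int_{\mathbb{R}}(|x|^{p}\wedge|x|^{2})|F_{f}|(dx)$, since for $p>2$ the latter is the smaller quantity and does not imply finiteness of the $p$-th moment.
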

\begin{proof}
	It follows from the same arguments as the ones used in the proof of Theorem 6.3 in \cite{Pass}. 
\end{proof}

\end{document}